 \numberwithin{equation}{section}
\newtheorem{theorem}{Theorem}[section]
\newtheorem{corollary}[theorem]{Corollary}
\newtheorem{proposition}[theorem]{Proposition}
\newtheorem{claim}[theorem]{Claim}
\theoremstyle{definition}
\newtheorem{definition}[theorem]{Definition}
\newtheorem{question}[theorem]{Question}
\newtheorem{remark}[theorem]{Remark}
\newtheoremstyle{principle}{}{}{\itshape}{}{\bfseries}{.}{.5em}{\thmnote{#3}#1}
\theoremstyle{principle}
\newtheorem*{principle}{}
\newtheoremstyle{case}{}{}{}{}{\itshape}{.}{.5em}{\thmnote{Case #3}#1}
\theoremstyle{case}
\newcommand{\N}{\mathbb{N}}
\newcommand{\res}{\mathbin{\upharpoonright}}
\newcommand{\sequence}[1]{\langle #1 \rangle}
\newcommand{\Iff}{\Longleftrightarrow}
\newcommand{\0}{\mathbf{0}}
\newcommand{\ZF}{\mathsf{ZF}}
\newcommand{\RCA}{\mathsf{RCA}_0}
\newcommand{\ACA}{\mathsf{ACA}_0}
\newcommand{\WKL}{\mathsf{WKL}_0}
\newcommand{\I}{\mathsf{I}}
\newcommand{\IP}{\mathsf{IP}}
\newcommand{\AMT}{\mathsf{AMT}}
\newcommand{\OPT}{\mathsf{OPT}}
\newcommand{\HYP}{\mathsf{HYP}}
\begin{document}

% For article class
%\title{Reverse mathematics and the finite intersection principle}
%\author{Damir D. Dzhafarov\\Department of Mathematics\\University of Notre Dame
%\and
%Carl Mummert\\Department of Mathematics\\Marshall University}
\date{September 15, 2011}

% For amsart class
\title{On the strength of the finite intersection principle}
\author{Damir D. Dzhafarov}
\address{Department of Mathematics\\University of Notre Dame\\255 Hurley Hall\\Notre Dame, Indiana 46556 U.S.A.}
\email{ddzhafar@nd.edu}
\author{Carl Mummert}
\address{Department of Mathematics\\Marshall University\\1 John Marshall Drive\\Huntington, West Virginia 25755 U.S.A.} \email{mummertc@marshall.edu}
\thanks{The authors are grateful to Denis Hirschfeldt, Antonio Montalb\'{a}n, and Robert Soare for valuable comments and suggestions. The first author was partially supported by an NSF Graduate Research Fellowship and an NSF Postdoctoral Fellowship.}

\begin{abstract}
We study the logical content of several maximality principles related to the finite intersection principle ($F\IP$) in set theory. Classically, these are all equivalent to the axiom of choice, but in the context of reverse mathematics their strengths vary: some are equivalent to $\ACA$ over $\RCA$, while others are strictly weaker, and incomparable with $\WKL$. We show that there is a computable instance of $F\IP$ all of whose solutions have hyperimmune degree, and that every computable instance has a solution in every nonzero c.e.\ degree. In terms of other weak principles previously studied in the literature, the fomer result translates to $F\IP$ implying the omitting partial types principle~($\mathsf{OPT}$). We also show that, modulo $\Sigma^0_2$ induction, $F\IP$ lies strictly below the atomic model theorem ($\mathsf{AMT}$).
\end{abstract}

\maketitle

\section{Introduction}

After Zermelo introduced the axiom of choice in 1904, set theorists began to obtain results proving other set-theoretic principles equivalent to it (relative to choice-free axiomatizations of set theory, such as $\ZF$). These equivalence results, and their further development, now constitute a program in set theory, which has been documented in detail by Jech~\cite{Jech-1973} and by Rubin and Rubin~\cite{RR-1970, RR-1985}.  Moore~\cite{Moore-1982} provides a general historical account of the axiom of choice.

In this article, we study the logical content of several such equivalences from the point of view of computability theory and reverse mathematics. Specifically, we focus on maximality principles related to the following:

\begin{principle}[Finite intersection principle]
Every family of sets has a $\subseteq$-maximal subfamily with the finite intersection property;
\end{principle}

\noindent This  research has two closely related motivations. First, we wish to study various equivalents of the axiom of choice to determine how they compare with one another and with other mathematical principles, in the spirit of the program of reverse mathematics. 
 This program is devoted to gauging the relative strengths of (countable analogues of) mathematical theorems by calibrating the precise set existence axioms necessary and sufficient to carry out their proofs in second-order arithmetic. Second, we wish to explore potential new connections between set-theoretic principles and computability-theoretic constructions, such as have emerged in the investigations of other theorems, looking for new insights into the underlying combinatorics of the principles. (For examples, see Hirschfeldt and Shore \cite[Section 1]{HS-2007}, and also Section \ref{sec_FIPandotherprinc} below.) We refer to Soare \cite{Soare-1987} and Simpson \cite{Simpson-2009}, respectively, for general background in computability theory and reverse mathematics.

Various forms of the axiom of choice have been studied in the present context, including direct formalizations of choice principles in second-order arithmetic by Simpson~\cite[Section VII.6]{Simpson-2009}; countable well-orderings by Friedman and Hirst~\cite{HF-1990} and Hirst~\cite{Hirst-2005}; and principles related to properties of finite character by Dzhafarov and Mummert \cite{DM-TA}. These principles display varying strengths, but tend to be at least as strong as $\ACA$. By contrast, the finite intersection principle and its variants will turn out to be strictly weaker than $\ACA$ and incomparable with $\WKL$. We establish a link between maximal subfamilies with the finite intersection property and sets of hyperimmune degree, which allows us to closely locate the positions of these principles among the statements lying between $\RCA$ and $\ACA$. In particular, we show that they are closely related in strength to the atomic model theorem, studied by Hirschfeldt, Shore, and Slaman \cite{HSS-2009}.

We pass to the formal definitions needed for the sequel.

\begin{definition}\label{definition_family}\
\begin{enumerate}
\item We define a \textit{family of sets} to be a sequence $A = \langle A_i : i \in \omega \rangle$ of sets. A family $A$ is \emph{nontrivial} if $A_i \neq \emptyset$ for some $i \in \omega$. 
\item Given a family of sets $A$, we say a set $X$ is \emph{in} $A$, and write $X \in A$, if $X = A_i$ for some $i \in \omega$. A family of sets $B$ is a \emph{subfamily} of $A$ if every set in $B$ is in~$A$, that is, $(\forall i)(\exists j)[B_i = A_j]$.
\item Two sets $A_i, A_j \in A$ are \textit{distinct} if they differ extensionally as sets.
\end{enumerate}
\end{definition}

Our definition of a subfamily is intentionally weak; see 
Proposition~\ref{prop_strongerfamACA_0} below and the remarks preceding it. 

\begin{definition}\label{definition_intproperties} 
Let $A = \langle A_i : i \in \omega \rangle$ be a family of sets and fix $n \geq 2$. Then $A$ has the
\begin{itemize}
\item \emph{$D_n$ intersection property} if the intersection of any $n$ distinct sets in $A$ is empty;
\item \emph{$\overline{D}_n$ intersection property} if the intersection of any $n$ distinct sets in $A$ is nonempty;
\item \emph{F intersection property} if for every $m \geq 2$, the intersection of any $m$ distinct sets in $A$ is nonempty.
\end{itemize}
\end{definition}

\begin{definition}\label{definition_maximalfam}
Let $A$ be a family of sets, let $P$ be any of the properties in Definition~\ref{definition_intproperties}, and let $B$ be a subfamily of $A$ with the $P$ intersection property. We say $B$ is a \emph{maximal} such subfamily if for every other such subfamily $C$, $B$ being a subfamily of $C$ implies $C$ is a subfamily of $B$.
\end{definition}

\noindent It is straightforward to formalize Definitions~\ref{definition_family}--\ref{definition_maximalfam} in $\RCA$.

Given a family $A = \langle A_i : i \in \omega \rangle$ and some $J \in \omega^{\omega}$, we use the notation $\langle A_{J(i)} : i \in \omega \rangle$ for the subfamily $\langle B_i : i \in \omega \rangle$ where $B_i = A_{J(i)}$. We call this the subfamily \emph{defined} by~$J$. Given a finite set $\{j_0,\ldots,j_{n-1}\} \subset \omega$, we let $\langle A_{j_0},\ldots,A_{j_{n-1}} \rangle$ denote the subfamily $\langle B_i : i \in \N \rangle$ where $B_i = A_{j_i}$ for $i < n$ and $B_i = A_{j_{n-1}}$ for $i \geq n$. More generally, we call a subfamily $B$ of $A$ \emph{finite} if there only finitely many distinct $A_i$ in $B$.

Let $P$ be any of the properties in Definition~\ref{definition_intproperties}. We shall be interested in the following maximality principles:

\begin{principle}[\textit{P} intersection principle $(P\IP)$]
Every nontrivial family of sets has a maximal subfamily with the $P$ intersection property.
\end{principle}

\noindent Following common usage, we shall refer to a given family as an \emph{instance} of $P\IP$, and to a maximal subfamily with the $P$ intersection property as a \emph{solution} to this instance.

The classic set-theoretic analogues of $D_n\IP$ and $\overline{D}_n\IP$ in the catalogue of Rubin and Rubin \cite{RR-1985} of equivalences of the axiom of choice are $\mathsf{M}\, 8 \, (D_n)$ and $\mathsf{M}\, 8 \, (\overline{D}_n)$, respectively; of $F\IP$ it is $\mathsf{M} \,14$. For additional references and results concerning these forms, see \cite[pp.~54--56,~60]{RR-1985}.

\begin{remark}\label{rem_FIPsubfamilies} 
Although we do not make it an explicit part of the definition, all of the families $\langle A_i : i \in \omega \rangle$ we construct in our results will have the property that for each $i$, $A_i$ contains $2i$ and otherwise contains only odd numbers. This will have the advantage that if we are given an arbitrary subfamily $B = \langle B_i : i \in \omega \rangle$ of some such family, we can, for each $i$, uniformly $B$-computably find the (unique) $j$ such that $B_i = A_j$. If $A$ is computable, each subfamily $B$ will then be of the form $\langle A_{J(j)} : j \in \omega \rangle$ for some $J \colon \omega \to \omega$ with $J \equiv_T B$.
\end{remark}

\section{\texorpdfstring{Basic implications and equivalences to $\ACA$}{Basic implications and equivalents to ACA0}}\label{subsec_FIPimplications} 
The following pair of propositions establishes the basic relations that hold among the principles we have defined. The proof of the first is straightforward and so we omit it.

\begin{proposition}\label{prop_PIPinACA0} 
For any property $P$ in Definition~\ref{definition_intproperties}, $P\IP$ is provable in $\ACA$.
\end{proposition}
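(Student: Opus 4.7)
The plan is to give a single construction that works uniformly for all three properties inside $\ACA$. Fix a nontrivial family $A = \langle A_i : i \in \omega \rangle$ and a property $P \in \{D_n, \overline{D}_n, F\}$. I will build the desired maximal subfamily by a greedy recursion on $\omega$: at stage $n$, include $A_n$ in the collection if and only if doing so still preserves property $P$ with respect to the sets already chosen.

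For each of the three properties, the predicate $\varphi_P(F)$ asserting that the finite collection $\{A_i : i \in F\}$ has property $P$ is arithmetic in $A$: for $D_n$ it is $\Pi^0_1$ (a bounded universal assertion that the intersection of any $n$ pairwise distinct sets indexed by elements of $F$ is empty), and for $\overline{D}_n$ and $F$ it is $\Pi^0_2$ (one additionally needs an existential witness in each common intersection). Using arithmetical comprehension, define $X \subseteq \omega$ by $n \in X$ if and only if there is a finite sequence $X_0, X_1, \ldots, X_{n+1}$ of finite sets with $X_0 = \emptyset$, with $X_{k+1} = X_k \cup \{k\}$ when $\varphi_P(X_k \cup \{k\})$ holds and $X_{k+1} = X_k$ otherwise, and with $n \in X_{n+1}$. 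This is an arithmetic definition, so $\ACA$ yields $X$; moreover $0 \in X$ vacuously, since a singleton collection satisfies any of the three properties. Let $J$ enumerate $X$ in increasing order (padding with the final entry if $X$ turns out to be finite), and set $B = \langle A_{J(i)} : i \in \omega \rangle$.

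The family $B$ has property $P$: any finitely many distinct sets appearing in $B$ lie in $\{A_i : i \in X_k\}$ for sufficiently large $k$, and $\varphi_P$ held at each such stage by construction. For maximality, suppose $C$ is a subfamily with property $P$ containing $B$ as a subfamily, and suppose further that some set $A_j$ appears in $C$ but not in $B$. Then $j \notin X$, so at stage $j$ we had $\varphi_P(X_j \cup \{j\})$ failing, meaning the finite collection $\{A_i : i \in X_j\} \cup \{A_j\}$ does not have $P$. But all of its sets appear in $C$, and each of $D_n$, $\overline{D}_n$, $F$ is plainly inherited by sub-collections of sets; this forces $C$ to also fail $P$, a contradiction. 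The one nontrivial point in the whole argument is the availability of $X$, which reduces to primitive recursion along an arithmetic predicate and is standard in $\ACA$.
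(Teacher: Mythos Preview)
Your proposal is correct. The paper itself omits the proof, calling it ``straightforward,'' and the greedy construction you give---include $A_n$ at stage $n$ exactly when doing so preserves property $P$, using arithmetical comprehension to carry out the recursion---is the standard argument and almost certainly what the authors had in mind.

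A few minor remarks that do not affect correctness. First, your stated complexity bounds for $\varphi_P$ are slightly off: for $\overline{D}_n$ and $F$, the antecedent ``pairwise distinct as sets'' is itself $\Sigma^0_1$, so $\varphi_P(F)$ is a bounded quantification over a Boolean combination of $\Sigma^0_1$ formulas rather than cleanly $\Pi^0_2$; of course this is immaterial in $\ACA$. Second, in the maximality argument you should note explicitly that if $A_j \notin B$ then $A_j \neq A_i$ for every $i \in X$ (not merely $j \notin X$), which is what guarantees that the failure of $\varphi_P(X_j \cup \{j\})$ genuinely involves $A_j$ as a new distinct set; you use this implicitly but it deserves a sentence. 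Finally, the nontriviality hypothesis plays no role in your argument (a singleton already has all three properties vacuously), which is fine since the hypothesis is only there to ensure the conclusion is not vacuous in the formulation of $P\IP$.
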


\begin{proposition}
For each standard $n \geq 2$, the following are provable in $\RCA$:
\begin{enumerate}
\item $F\IP$ implies $\overline{D}_n\IP$;
\item $\overline{D}_{n+1}\IP$ implies $\overline{D}_n\IP$.
\end{enumerate}
\end{proposition}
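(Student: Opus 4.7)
Both implications can be handled by a similar reduction argument, so I focus on (2); the argument for (1) is analogous, substituting $F\IP$ for $\overline{D}_{n+1}\IP$ and using that the $F$ property implies $\overline{D}_m$ for every $m \geq 2$.

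Given a nontrivial family $A = \langle A_i : i \in \omega \rangle$ as an instance of $\overline{D}_n\IP$, my plan is to apply $\overline{D}_{n+1}\IP$ not to $A$ itself but to an augmented family $A^\sharp$. First, renumber $\omega$ computably so that there is an infinite set $\{z_k : k \in \omega\}$ disjoint from every $A_i$, and define $U_k = \omega \setminus \{z_k\}$. Each $U_k$ is extensionally distinct from every other $U_j$ and from every $A_i$, and each $U_k$ contains every $A_i$, so that $U_k \cap A_i = A_i$ and $U_{k_1} \cap \cdots \cap U_{k_r} = \omega \setminus \{z_{k_1}, \ldots, z_{k_r}\}$ is infinite. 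Let $A^\sharp$ be the family obtained by interleaving the $U_k$'s with the $A_i$'s, apply $\overline{D}_{n+1}\IP$ to get a maximal subfamily $B^\sharp$ with the $\overline{D}_{n+1}$ property, and take as the candidate solution the subfamily $B$ of $A$ consisting of those $A_i$ appearing in $B^\sharp$.

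The verification has two steps. For the $\overline{D}_n$ property of $B$, I would show first that $B^\sharp$ must contain every $U_k$: whenever the $A$-part of $B^\sharp$ already has $\overline{D}_n$, any extension by $U_k$ preserves $\overline{D}_{n+1}$ (since an $(n{+}1)$-wise intersection involving $U_k$ equals the intersection of the remaining $n$ sets, which is an at-most-$n$-wise $A$-intersection); and this combines with the observation that once any single $U_k$ belongs to $B^\sharp$, the $\overline{D}_{n+1}$ property of $B^\sharp$ forces $\overline{D}_n$ on the $A$-part, because any $n$ distinct $A$-sets together with that $U_k$ form $n+1$ distinct sets in $B^\sharp$ whose intersection is precisely the $n$-wise $A$-intersection. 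For maximality, if some $A_j \notin B$ could be added to $B$ preserving $\overline{D}_n$, then $B^\sharp \cup \{A_j\}$ would still have $\overline{D}_{n+1}$: any new $(n{+}1)$-wise intersection involving $A_j$ either avoids all $U_k$ (and is then an $(n{+}1)$-wise $A$-intersection, which reduces via the $U_k$'s already present) or includes a $U_k$ and so is an at-most-$n$-wise $A$-intersection with $A_j$, controlled by the assumed $\overline{D}_n$ property. This contradicts the maximality of $B^\sharp$.

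The main obstacle is the bootstrap: the claim that $B^\sharp$ contains every $U_k$ depends on the $A$-part of $B^\sharp$ having $\overline{D}_n$, which is what we are trying to establish. This is resolved by a case split on the number of distinct $A$-sets in $B^\sharp$. When there are at least $n+1$, the $\overline{D}_{n+1}$ property on $B^\sharp$ itself already yields $\overline{D}_n$ on the $A$-part by the subset argument (any $n$-wise intersection contains an $(n{+}1)$-wise intersection), so the maximality argument closes without circularity. The delicate case is when fewer than $n+1$ distinct $A$-sets appear in $B^\sharp$, for which $\overline{D}_{n+1}$ is vacuously true; here I expect the construction of $A^\sharp$ can be adjusted to ensure this case yields a correct maximal $\overline{D}_n$ subfamily of $A$ directly, perhaps by exploiting the nontriviality of $A$ to seed the construction with a single $A_i$ known to be nonempty.
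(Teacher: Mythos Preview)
Your approach has a genuine gap, and it is not the ``delicate case'' you flag at the end --- the maximality step fails outright, even when $B^\sharp$ contains every $U_k$.

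Take $n=2$ and let $A$ consist of three sets $A_1=\{1,2\}$, $A_2=\{2,3\}$, $A_3=\{1,3\}$, pairwise intersecting but with $A_1\cap A_2\cap A_3=\emptyset$. Form $A^\sharp$ by adjoining your universal sets $U_k$. Then
\[
B^\sharp=\{A_1,A_2\}\cup\{U_k:k\in\omega\}
\]
is a maximal subfamily of $A^\sharp$ with the $\overline{D}_3$ property (indeed with the $F$ property): any three distinct members meet, and adding $A_3$ destroys this since $A_1\cap A_2\cap A_3=\emptyset$. Yet the $A$-part $B=\{A_1,A_2\}$ is not maximal with the $\overline{D}_2$ property, because $A_3$ meets both $A_1$ and $A_2$. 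So the reduction fails for both (1) and (2). The specific error is the clause ``an $(n{+}1)$-wise $A$-intersection, which reduces via the $U_k$'s already present'': the $U_k$'s are supersets of every $A_i$, so they cannot witness nonemptiness of $A_j\cap A_{i_1}\cap\cdots\cap A_{i_n}$ from the hypothesis that $B\cup\{A_j\}$ has only $\overline{D}_n$. Adjoining large sets to the family simply does not encode the constraint you need.

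The paper's proof works at the level of \emph{elements} rather than \emph{sets}: it builds a new family $\widehat{A}$ on the same index set with
\[
\bigcap_{i\in F}\widehat{A}_i\neq\emptyset \Iff (\forall G\subseteq F)\bigl[\,|G|=n\implies \textstyle\bigcap_{i\in G}A_i\neq\emptyset\,\bigr],
\]
by enumerating a fresh witness into $\bigcap_{i\in F}\widehat{A}_i$ precisely when the right-hand side is seen to hold with bounded witnesses. This makes the $F$ property on subfamilies of $\widehat{A}$ coincide \emph{exactly} with the $\overline{D}_n$ property on the corresponding subfamilies of $A$, so maximality transfers immediately. For (2) one replaces $|F|\geq n$ by $|F|=n+1$. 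The point is that the reduction must rig the intersection pattern itself; padding with universal sets leaves too much slack.
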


\begin{proof}
To prove (1), let $A = \langle A_i : i \in \N \rangle$ be a nontrivial family of sets. By recursion, define a new family $\widehat{A} = \langle \widehat{A}_i : i \in \N \rangle$ with the property that for every finite set $F$ with $|F| \geq n$,
\begin{equation}\label{property_int}
\bigcap_{i \in F} \widehat{A}_i \neq \emptyset \Iff (\forall G \subseteq F)[\,|G| = n \implies \bigcap_{i \in G} A_i \neq \emptyset \,].
\end{equation}
First, for all $i \neq j$, put $2i \in \widehat{A}_i$ and $2j \notin \widehat{A}_i$. Then, define the $\widehat{A}_i$ on successively longer initial segments of the odd numbers. Suppose that the $\widehat{A}_i$ have been defined precisely on the odd numbers less than $2s+1$. Consider all finite sets $F \subseteq \{0,\ldots,s\}$ such that $|F| \geq n$ and for every $G \subseteq F$ with $|G| = n$ there is an $x \leq s$ belonging to $\bigcap_{i \in G} A_i$. If no such $F$ exists, enumerate $2s+1$ into the complement of $\widehat{A}_i$ for all~$i\in \N$. Otherwise, list these sets as $F_0,\ldots,F_{k-1}$, and for each $j < k$, enumerate $ 2(s+j)+1$ into $\widehat{A}_i$ if $i \in F_j$, and into the complement of $\widehat{A}_i$ if $i \notin F_j$. Thus, $\bigcap_{i \in F_j} \widehat{A}_i \neq \emptyset$, as desired.

The family $\widehat{A}$ exists by $\Delta^0_1$ comprehension, and is nontrivial by construction. It is also easily seen to satisfy \eqref{property_int}. Applying $F\IP$, let $\widehat{B} = \langle \widehat{B}_i : i \in \N \rangle$ be a maximal subfamily of $\widehat{A}$ with the $F$ intersection property. As each $\widehat{B}_i$ contains $2j$ for the $j$ such that $\widehat{B}_i = \widehat{A}_j$, and otherwise contains only odd numbers, it follows (by formalizing Remark \ref{rem_FIPsubfamilies}) that there is a $J \colon \omega \to \omega$ such that $\widehat{B} = \sequence{\widehat{A}_{J(j)} : j \in \N}$. We claim that $B = \sequence{A_{J(j)} : j \in \N}$ is a maximal subfamily of $A$ with the $\overline{D}_n$ intersection property. This follows from \eqref{property_int}, with the fact that $B$ has this property being clear. To show maximality, suppose $A_i \notin B$. Then $\widehat{A}_i \notin \widehat{B}$, so since $\widehat{B}$ is maximal, there must exist a finite set $F$ containing $i$ and otherwise only members of the range of $J$ such that $\bigcap_{j \in F} A_j = \emptyset$. By adding elements to $F$ if necessary, we may assume $|F| \geq n$, whence the definition of $\widehat{A}$ implies there is a $G \subseteq F$ such that $|G| = n$ and $\bigcap_{j \in G} A_{j} = \emptyset$. Now $G$ must contain $i$, since otherwise each $A_j$ for $j \in G$ would be in $B$ and $\bigcap_{j \in G} A_{j}$ could not be empty. It follows that no subfamily of $A$ with the $\overline{D}_n$ intersection property can contain $A_i$ in addition to all $A_j \in B$, as desired.

A similar argument can be used to prove (2). The construction of $\widehat{A}$ is simply modified so that, instead of looking at finite sets $F \subseteq\{0,\ldots,s\}$ with $|F| \geq n$, it considers those with $|F| = n+1$.
\end{proof}

We do not know whether the implications from $F\IP$ to $\overline{D}_n\IP$ or from $\overline{D}_{n+1}\IP$ to $\overline{D}_n\IP$ are strict. However, all of our results in the sequel hold equally well for $F\IP$ as they do for $\overline{D}_2\IP$. Thus, we shall formulate all implications over $\RCA$ involving these principles as being to $F\IP$ and from $\overline{D}_2\IP$.

An apparent weakness of our definition of subfamily is that we cannot, in general, effectively decide which members of a family are in a given subfamily. The following proposition demonstrates that if the definition were strengthened to make this decidable, all the intersection principles would collapse to $\ACA$. The subsequent proposition shows that this happens for $P = D_n$ even with the weak definition.

\begin{proposition}\label{prop_strongerfamACA_0}
Let $P$ be any of the properties in Definition~\ref{definition_intproperties}. The following are equivalent over $\RCA$:
\begin{enumerate}
\item $\ACA$;
\item every nontrivial family of sets $\langle A_i : i \in \N \rangle$ has a maximal subfamily $B$ with the $P$ intersection property, and the set $I = \{i \in \N: A_i \in B\}$ exists.
\end{enumerate}
\end{proposition}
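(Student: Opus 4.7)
For $(1)\Rightarrow(2)$, work in $\ACA$. By Proposition~\ref{prop_PIPinACA0}, a maximal subfamily $B$ with the $P$ intersection property exists. Using the convention of Remark~\ref{rem_FIPsubfamilies} (which we may arrange after a routine preliminary modification of the family), there is a function $J$ with $B = \langle A_{J(j)} : j \in \N\rangle$ and $J \equiv_T B$, so that the set $I = \{i : A_i \in B\} = \ran(J)$ is $\Sigma^0_1$-definable from $A \oplus B$. Arithmetic comprehension then yields $I$ as a set.

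For $(2)\Rightarrow(1)$, I would work in $\RCA$ together with $(2)$ and prove $\ACA$ via its equivalent formulation (over $\RCA$) that every one-to-one function $f:\N\to\N$ has its range as a set. Given such an $f$, I would construct a computable family $\langle A_i : i \in \N\rangle$ whose intersection structure reflects $f$. A first attempt is to take $A_0 = \{0\} \cup \{2s+1 : s \in \N\}$ together with $A_n = \{2n\} \cup \{2s+1 : \exists t \leq s,\, f(t) = n\}$ for $n \geq 1$, so that $A_0 \cap A_n$ is nonempty precisely when $n \in \ran(f)$ and $A_n \cap A_m$ is nonempty for $n,m \geq 1$ precisely when $n,m \in \ran(f)$. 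The intended maximal subfamily is then $B^{*} = \{A_0\} \cup \{A_n : n \in \ran(f)\}$, whose index set satisfies $I = \{0\} \cup \ran(f)$, so that $\ran(f) = I \setminus \{0\}$ exists by $\Delta^0_1$ comprehension from $I$.

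The main obstacle is that this family also admits ``degenerate'' maximal $P$-subfamilies $\{A_0, A_n\}$ for $n \notin \ran(f)$ (and analogous small configurations for $F$ and $\overline{D}_n$), whose index sets are too sparse to decode $\ran(f)$. Since $(2)$ only asserts the \emph{existence} of some max $B$ and its $I$, I cannot rule out that the principle provides one of these bad subfamilies. To close this gap, I would augment the construction with auxiliary sets --- for instance, for each $n$ a set $C_n$ that is compatible with every $A_m$ other than $A_n$ when $n \notin \ran(f)$, and that uses the odd-number encoding to witness $n \in \ran(f)$ at every stage --- chosen so that $C_n$ belongs to every maximal subfamily, and such a $C_n$ simultaneously forces out $A_n$ exactly when $n \notin \ran(f)$. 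A careful bookkeeping in the style of the construction of $\widehat{A}$ in the proof of Proposition~\ref{prop_PIPinACA0}'s successor should then guarantee that any maximal $P$-subfamily has the form $B^{*} \cup \{C_n : n \in \N\}$, uniformly in $P \in \{D_n, \overline{D}_n, F\}$, so that $\ran(f)$ is recovered $\Delta^0_1$ from $I$ regardless of which maximal subfamily $(2)$ hands us. The hard part is designing the auxiliary sets so that they are computable, interact correctly with each of the three $P$-properties simultaneously, and do not themselves spawn new degenerate maximal subfamilies.
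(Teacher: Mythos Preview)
Your $(1)\Rightarrow(2)$ direction is fine, and your basic construction for $(2)\Rightarrow(1)$ is essentially the paper's: the paper drops your extra set $A_0$ and simply puts
\[
A_i = \{2i\} \cup \{2a+1 : (\exists b \leq a)[f(b) =i]\},
\]
so that any two distinct $A_i,A_j$ meet iff $i,j\in\operatorname{ran}(f)$. For $P=D_n$ the paper observes that every maximal $D_n$-subfamily must contain \emph{all} $A_j$ with $j\notin\operatorname{ran}(f)$ (adding such an $A_j$ never creates a nonempty $n$-fold intersection) together with at most $n-1$ many $A_j$ with $j\in\operatorname{ran}(f)$; hence $\operatorname{ran}(f)$ agrees with $\N\setminus I$ up to a finite set of size at most $n-1$, which one locates by a bounded search. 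No auxiliary sets are needed, and your uniform plan is unnecessarily complicated in this case.

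Your worry about degenerate subfamilies in the $\overline{D}_n$ and $F$ cases is legitimate, but your specific example is wrong: in your family $A_0\cap A_n=\emptyset$ whenever $n\notin\operatorname{ran}(f)$, so $\{A_0,A_n\}$ does \emph{not} have the $\overline{D}_2$ property. The genuine obstruction is the singleton subfamily $\{A_n\}$ for $n\notin\operatorname{ran}(f)$: it vacuously has the $F$ (hence every $\overline{D}_m$) property, and since $A_n=\{2n\}$ is disjoint from every other $A_k$, it is maximal. Its index set $I=\{n\}$ carries no information about $\operatorname{ran}(f)$. The paper's proof simply asserts that ``each $B_i$ contains cofinitely many odd numbers'' and proceeds; this assertion is only justified once one knows $B$ contains at least two distinct sets, which is exactly the point you are worried about. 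So you have in fact put your finger on a detail the paper glosses over.

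That said, your proposed repair via auxiliary sets $C_n$ is too vague to assess: you have not specified them, and the requirement that each $C_n$ lie in \emph{every} maximal $P$-subfamily while simultaneously excluding $A_n$ exactly when $n\notin\operatorname{ran}(f)$ is not obviously realizable by a computable family (and would have to be argued separately for each $P$). If you want to pursue this direction, you should actually write down the $C_n$ and verify the claimed properties; alternatively, look for a simpler modification of the family that rules out singleton maximal subfamilies from the outset.
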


\begin{proof} 
That~(1) implies~(2) is proved similarly to Proposition~\ref{prop_PIPinACA0}.

To show that (2) implies (1), we work in $\RCA$ and let $f\colon \N \to \N$ be a given function. For each $i$, let
\[
A_i = \{2i\} \cup \{2a+1 : (\exists b \leq a)[f(b) =i]\}.
\]
noting that $i \in \operatorname{range}(f)$ if and only if $A_i$ is not a singleton, in which case $A_i$ contains cofinitely many odd numbers. Consequently, for every finite $F \subset \N$ with $|F| \geq 2$, we have $\bigcap_{i \in F} A_i \neq \emptyset$ if and only if each $i \in F$ is in the range of~$f$.

Apply (2) with to the family $A = \langle A_i : i \in \N \rangle$ to find the corresponding subfamily $B$ and set~$I$. If $P = D_n$ then there are at most~$n-1$ distinct~$j$ such that $j \in \operatorname{range}(f)$ and $A_j \in B$. For each~$i$ not among these $j$ we have
\[
i \in \operatorname{range}(f) \Iff A_i \notin B \Iff i \notin I.
\]
If instead $P = F$ or $P = \overline{D}_n$ then each~$B_i$ contains cofinitely many odd numbers and we have
\[ 
i \in \operatorname{range}(f) \Iff A_i \in B \Iff i \in I.
\]
In any case, the range of $f$ exists.
\end{proof}

\begin{proposition}\label{prop_DnIPtoACA0} 
For each standard $n \geq 2$, $D_n\IP$ is equivalent to $\ACA$ over $\RCA$.
\end{proposition}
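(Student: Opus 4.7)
The plan is to take the direction $\ACA \to D_n\IP$ from Proposition~\ref{prop_PIPinACA0}, and for the reverse direction $D_n\IP \to \ACA$ to show, working in $\RCA$, that $D_n\IP$ proves the range of any function $f\colon \N \to \N$ exists. I would design a computable family whose $D_n$ constraint is saturated at exactly those second coordinates lying in $\ran(f)$, so that maximality of a solution encodes $\ran(f)$ in a $\Delta^0_1$ way.

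The construction I would use indexes the family by pairs: for each $(i,j) \in \N \times \N$, set
\[
A_{\pair{i,j}} = \{\pair{0,i,j}\} \cup \{\pair{1,s,j} : (\exists b \leq s)[f(b) = j]\}.
\]
The tag $\pair{0,i,j}$ lies uniquely in $A_{\pair{i,j}}$ and plays the role of Remark~\ref{rem_FIPsubfamilies}, so any subfamily $B$ determines its index set computably from $B$. The intersection analysis that I would verify is: for distinct pairs, $A_{\pair{i,j}} \cap A_{\pair{i',j'}}$ is nonempty iff $j = j'$ and $j \in \ran(f)$, in which case it contains the cofinite tail $\{\pair{1,s,j} : s \geq b_0\}$, with $b_0$ the least witness. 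More generally, any $n$ distinct sets from the family have nonempty intersection iff they share a common second coordinate $j$ with $j \in \ran(f)$.

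Applying $D_n\IP$ to $A$ yields a maximal subfamily $B$. For $j \notin \ran(f)$, the sets $\{A_{\pair{i,j}} : i \in \N\}$ are pairwise disjoint singletons, disjoint from every other $A_{\pair{i',j'}}$, so all of them lie in $B$ by maximality. For $j \in \ran(f)$, at most $n-1$ of the $A_{\pair{i,j}}$ lie in $B$ by the $D_n$ property, and at least $n-1$ lie in $B$, because otherwise a further $A_{\pair{i^*,j}}$ could be adjoined: the only way to form $n$ distinct sets with nonempty intersection inside $B \cup \{A_{\pair{i^*,j}}\}$ would require $n-1$ further indices $i'$ in the current $R_j = \{i : A_{\pair{i,j}} \in B\}$ other than $i^*$, which is impossible if $|R_j| < n-1$. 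Consequently, $j \in \ran(f)$ iff no $n$ distinct indices $i_1, \dots, i_n$ all satisfy $A_{\pair{i_m,j}} \in B$, which is a $\Pi^0_1$ condition in $B$; combined with the $\Sigma^0_1$ definition $\exists b\,(f(b) = j)$, this gives $\ran(f)$ by $\Delta^0_1$ comprehension in $\RCA$. The most delicate step I anticipate is the ``at least $n-1$'' half of the maximality argument: the construction must force $B$ to include $n-1$ witnesses above every $j \in \ran(f)$, not merely permit this, and verifying it cleanly requires the precise intersection classification above.
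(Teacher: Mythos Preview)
Your proof is correct. The overall strategy matches the paper's: build a computable family whose intersection pattern encodes $\ran(f)$, apply $D_n\IP$, and observe that maximality forces a $\Pi^0_1$ characterization of $\ran(f)$ relative to the solution $B$, which combines with the obvious $\Sigma^0_1$ definition to give $\ran(f)$ by $\Delta^0_1$ comprehension. The constructions differ. The paper reuses the simpler one-parameter family from Proposition~\ref{prop_strongerfamACA_0}, namely $A_i = \{2i\} \cup \{2a+1 : (\exists b \leq a)[f(b)=i]\}$, and argues that at most $n-1$ many $j \in \ran(f)$ can have $A_j \in B$; for all remaining $i$ one has $i \in \ran(f) \iff (\forall k)[2i \notin B_k]$. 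Your two-parameter family localizes the argument columnwise, so that for every $j$ the biconditional ``$j \in \ran(f)$ iff fewer than $n$ copies lie in $B$'' holds without exceptions. This makes the $\Delta^0_1$ step slightly cleaner, at the cost of a heavier family.

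One remark: the step you flag as ``most delicate'' --- showing that at least $n-1$ copies $A_{\pair{i,j}}$ lie in $B$ when $j \in \ran(f)$ --- is correct but superfluous. The biconditional you actually invoke needs only the upper bound $|R_j| \leq n-1$ for $j \in \ran(f)$ (immediate from the $D_n$ property of $B$) together with $R_j = \N$ for $j \notin \ran(f)$ (from maximality, since each $A_{\pair{i,j}}$ is then a singleton disjoint from every other set in the family). The exact cardinality of $R_j$ for range elements never enters the argument.
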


\begin{proof} 
Fix a function $f \colon \N \to \N$, and let $A$ be the family defined in the preceding proposition. Let $B = \langle B_i : i \in \N \rangle$ be the family obtained from applying $D_n\IP$ to~$A$. As above, there can be at most $n-1$ many $j$ such that $j \in \operatorname{range}(f)$ and $A_j \in B$. For~$i$ not equal to any such~$j$, we have
\[ 
i \in \operatorname{range}(f) \Iff A_i \notin B \Iff (\forall k)[2i \notin B_k],
\]
giving us a $\Pi^0_1$ definition of the range of~$f$. Since the range is also definable by a $\Sigma^0_1$ formula, we conclude by $\Delta^0_1$ comprehension that it exists.
\end{proof}

We conclude this section by showing that, by contrast, $F\IP$ is strictly weaker than~$\ACA$. We will obtain a considerable strengthening of this fact in Theorem \ref{thm_FIPpermitting}, but the proof here further illustrates the flexibility of our definition of subfamily. 

\begin{proposition}\label{prop_FIPnotrevACA0}
Every computable nontrivial family has a low maximal subfamily with the $F$ intersection property.
\end{proposition}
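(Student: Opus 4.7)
The plan is to build $B = \langle A_{J(i)} : i \in \omega \rangle$, for a suitable $J \colon \omega \to \omega$, via a priority construction computable from $\emptyset'$. By Remark~\ref{rem_FIPsubfamilies}, subfamilies of $A$ correspond to such $J$'s, and crucially $J$ need not be injective---it may repeat values freely, which is the flexibility that lets the construction extend $J$ at every stage (stalling by repeating the last value whenever there is no new index to commit to) and thereby avoid any need for retraction.

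The construction interleaves two kinds of requirements in the schedule $N_0, G_0, N_1, G_1, \ldots$, with $J_s$ denoting the finite initial segment defined by stage $s$. For each $k \in \omega$, the maximality requirement $G_k$ decides whether $k \in \ran(J)$: when processed at stage $s$, if $k \notin \ran(J_s)$, we query $\emptyset'$ (a $\Sigma^0_1$ question) whether $A_k \cap \bigcap_{j \in \ran(J_s)} A_j \neq \emptyset$, appending $k$ to $J$ if yes and committing to $k \notin \ran(J)$ otherwise. For each $e \in \omega$, the lowness requirement $N_e$ decides $\{e\}^J(e)\downarrow$: we query $\emptyset'$ whether there is a finite extension $\tau \supseteq J_s$ whose indexed subfamily has the $F$ intersection property and for which $\{e\}^\tau(e)\downarrow$---both being $\Sigma^0_1$ conditions. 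If yes, we set $J_{s+1} = \tau$; if no, we commit to $\{e\}^J(e)\uparrow$ by compactness. After processing, we pad $J$ to a strictly longer initial segment (if necessary) by repeating its last value.

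Each decision is permanent, and the two kinds of requirements do not conflict. A $G_k$-exclusion committed at stage $s_0$ requires $A_k \cap \bigcap_{j \in \ran(J_{s_0})} A_j = \emptyset$; since $\ran(J)$ only grows, the intersection only shrinks, so any FIP-preserving $\tau \supseteq J_s$ (for $s \geq s_0$) must have $k \notin \ran(\tau)$, as otherwise $\bigcap_{j \in \ran(\tau)} A_j \subseteq A_k \cap \bigcap_{j \in \ran(J_{s_0})} A_j = \emptyset$ would violate FIP. Similarly, an $N_e$ committing to $\{e\}^J(e)\downarrow$ via some $\tau$ preserves the convergent computation, since $\tau$ is a prefix of the final $J$; while an $N_e$ committing to $\{e\}^J(e)\uparrow$ holds in the limit, as any convergent $\{e\}^J(e)$ would be witnessed by a finite FIP-preserving prefix $\tau \supseteq J_s$, contradicting the ``no'' verdict.

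The construction is $\emptyset'$-computable, the resulting $J$ gives a maximal subfamily $B$ with the $F$ intersection property, and $J$ is low: $e \in J'$ if and only if the $\emptyset'$-query at the $N_e$-stage returned ``yes,'' and this can be read off from $\emptyset'$. The main technical point---and the one that requires the most care---is the no-conflict argument above, which is why the construction proceeds monotonically and hence needs no injury machinery.
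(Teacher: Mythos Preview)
Your proof is correct and follows essentially the same strategy as the paper's: build $J$ by a $\emptyset'$-oracle construction that alternates between deciding the jump (your $N_e$) and forcing maximality (your $G_k$), using that both questions are $\Sigma^0_1$. The paper packages this as a forcing argument whose conditions are strings $\sigma$ carrying, as their last entry, a bound on a witness to $\bigcap_{i<|\sigma|-1} A_{\sigma(i)}$; you instead phrase the $\Sigma^0_1$ queries directly. The padding-by-repetition you use to keep $|J_s|$ growing corresponds to the paper's freedom to leave $\sigma_{e+1}=\sigma_e$ when no extension is found. One small quibble: your appeal to Remark~\ref{rem_FIPsubfamilies} is not quite on point, since that remark concerns families the paper constructs rather than an arbitrary computable $A$; but you do not actually need it, since you only use that $B=\langle A_{J(i)}:i\in\omega\rangle$ is computable from $J$ (hence low), which holds for any computable~$A$.
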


\begin{proof} 
Given $A = \langle A_i : i \in \omega \rangle$ computable and nontrivial, consider the notion of forcing whose conditions are strings $\sigma \in \omega^{<\omega}$ such that some number $\leq \sigma(|\sigma|-1)$ belongs to $A_{\sigma(i)}$ for all $i < |\sigma| - 1$, and $\tau \leq\sigma$ if $\tau \res |\tau| - 1 \succeq \sigma \res |\sigma| - 1$. Now fix any $A_i \neq \emptyset$, say with $a \in A_i$, and let $\sigma_0 = i a$. Given $\sigma_{2e}$ for some $e \in \omega$, ask if there is a condition $\sigma \leq \sigma_{2e}$ such that $\Phi^{\sigma \res |\sigma| - 1}_e(e)\downarrow$. If so, let $\sigma_{2e+1}$ be the least such $\sigma$ of length greater than $|\sigma_{2e}|$, and if not, let $\sigma_{2e+1} = \sigma_{2e}$. Given $\sigma_{2e+1}$, ask if there is a condition $\sigma \leq \sigma_{2e+1}$ such that $\sigma(i) = e$ for some $i < |\sigma| - 1$. If so, let $\sigma_{2e+2}$ be the least such $\sigma$, and if not, let $\sigma_{2e+2} = \sigma_{2e+1}$. A standard argument establishes that $J = \bigcup_{e \in \omega} \left( \sigma_e \res |\sigma_e| - 1\right )$ is low, and hence that so is $B = \langle A_{J(i)} : i \in \omega \rangle$. It is clear that $B$ is a maximal subfamily of $A$ with the $F$ intersection property.
\end{proof}

Iterating and dovetailing this argument produces an $\omega$-model witnessing:

\begin{corollary}\label{cor_FIPnotrevACA0}
Over $\RCA$, $F\IP$ does not imply $\ACA$.
\end{corollary}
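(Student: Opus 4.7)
The plan is to build an $\omega$-model of $\RCA + F\IP$ whose second-order part consists entirely of low sets; since $\emptyset'$ is not low, such a model cannot satisfy $\ACA$, yielding the desired separation.

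The first step is to relativize Proposition \ref{prop_FIPnotrevACA0}: inspecting its proof, the forcing construction uses the computability of $A$ only as a black box, so the same argument carried out with an oracle $X$ shows that every $X$-computable nontrivial family has a maximal subfamily $B$ with the $F$ intersection property satisfying $(X \oplus B)' \leq_T X'$. In particular, if $X$ is low then so is $X \oplus B$, so iteration will stay inside the class of low sets.

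Next, I would dovetail over all potential instances. Fix an enumeration $(n_k, e_k)_{k \in \omega}$ of $\omega \times \omega$ with $n_k \leq k$ and in which each pair $(n,e)$ appears infinitely often. Starting from $X_0 = \emptyset$, at stage $k$ check whether $\Phi_{e_k}^{X_{n_k}}$ codes a nontrivial family of sets; if it does, apply the relativized proposition to obtain a maximal subfamily $B_k$ that is low over $X_{n_k}$, and set $X_{k+1} = X_k \oplus B_k$, while if not set $X_{k+1} = X_k$. By induction every $X_k$ is low (using that lowness is preserved under Turing join and that low-over-low is low), and the $\omega$-model $\mathcal{M}$ whose second-order universe is $\bigcup_k \{Y : Y \leq_T X_k\}$ is closed under $\oplus$ and Turing reducibility, hence satisfies $\RCA$.

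The main obstacle is to verify $\mathcal{M} \models F\IP$, which requires that \emph{every} instance living in the model be solved. Given any nontrivial family $A \in \mathcal{M}$, there exist $n$ and $e$ with $A = \Phi_e^{X_n}$; the infinite-repetition clause on the enumeration ensures that $(n_k, e_k) = (n,e)$ for some $k$, so at that stage a maximal subfamily of $A$ with the $F$ intersection property is added to $\mathcal{M}$. Since every set in $\mathcal{M}$ is low, $\emptyset' \notin \mathcal{M}$, and therefore $\mathcal{M} \not\models \ACA$. The only subtle technical point is the iteration of lowness, which is a standard fact.
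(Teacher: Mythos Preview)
Your approach is exactly the one the paper has in mind: its entire proof of this corollary is the single sentence ``Iterating and dovetailing this argument produces an $\omega$-model,'' and you have simply spelled out what that means.

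There is, however, a technical slip in your iteration step. You apply the relativized Proposition~\ref{prop_FIPnotrevACA0} with oracle $X_{n_k}$, obtaining $B_k$ low over $X_{n_k}$, and then set $X_{k+1} = X_k \oplus B_k$. You justify that $X_{k+1}$ is low by appealing to ``lowness is preserved under Turing join,'' but this is false: there are low sets $A,B$ with $A \oplus B \equiv_T \emptyset'$ (e.g., a Sacks splitting of $\emptyset'$). The correct move is to apply the relativized proposition with oracle $X_k$ rather than $X_{n_k}$; since $X_{n_k} \leq_T X_k$, the family $\Phi_{e_k}^{X_{n_k}}$ is still $X_k$-computable, and you directly obtain $(X_k \oplus B_k)' \leq_T X_k' \leq_T \emptyset'$, so $X_{k+1}$ is low by the ``low-over-low is low'' fact you already cite. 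With that one-word change the argument goes through.
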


\section{Connections with hyperimmunity}\label{sec_hyperimmunity}

Corollary \ref{cor_FIPnotrevACA0} naturally leads to the question of whether $F\IP$ (or any one of the principles $\overline{D}_n\IP$) is provable in $\RCA$, or at least in $\WKL$. We show in this section that the answer to both questions is no. Recall that a Turing degree is \emph{hyperimmune} if it bounds the degree of a function not dominated by any computable function; a degree which is not hyperimmune is \emph{hyperimmune-free}. In this section, we prove the following result:

\begin{theorem}\label{thm_FIPtoOPT}
There is a computable nontrivial family of sets, any maximal subfamily of which with the $\overline{D}_2$ intersection property has hyperimmune degree.
\end{theorem}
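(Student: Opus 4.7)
The plan is to construct a computable family $A = \langle A_i : i \in \omega \rangle$ in stages so that from any maximal $\overline{D}_2$-subfamily $B$ we can uniformly $B$-compute a function that escapes every total computable function. To organize the diagonalization, I would index $A$ by pairs: write $A_{\langle e, n \rangle}$ for $e, n \in \omega$, so that the $e$-th ``column'' $\{A_{\langle e, n\rangle} : n \in \omega\}$ is used to diagonalize against $\Phi_e$. Following Remark \ref{rem_FIPsubfamilies}, each $A_{\langle e, n\rangle}$ will consist of the distinguishing even $2\langle e, n\rangle$ together with odd numbers, so that the function $J_B$ enumerating the indices in $B$ is $B$-computable. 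The target escape function will be $f_B(e) := \mu n [A_{\langle e, n \rangle} \in B]$ (with $f_B(e) = 0$ if no such $n$ exists), and I aim to force $f_B(e) > \Phi_e(e)$ whenever $\Phi_e(e)\downarrow$. By the padding lemma, any total computable function $g$ equals $\Phi_e$ for infinitely many $e$, so this would give $f_B(e) > g(e)$ for infinitely many $e$, placing $B$ into a hyperimmune degree.

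To enforce $f_B(e) > \Phi_e(e)$, first I would keep the columns internally disjoint by never adding a shared odd element to $A_{\langle e, n \rangle}$ and $A_{\langle e, n' \rangle}$ for $n \neq n'$; this ensures any $\overline{D}_2$-subfamily picks at most one option $n_e$ per column. Next, I would add cross-column shared odd elements in a delayed way, keyed to the approximation of $\Phi_e(e)$: at stage $s$, once we have seen $\Phi_e(e) \downarrow = v$, begin linking the ``approved'' sets $A_{\langle e, n \rangle}$ with $n > v$ to the rest of the structure, while simultaneously activating a family of ``sentinel'' sets $S_e$ that are compatible with everything \emph{except} the forbidden $A_{\langle e, n \rangle}$ for $n \leq v$. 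A maximality argument should then force every sentinel into $B$, in turn forcing each chosen $n_e$ to exceed $\Phi_e(e)$.

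The hard part, which I expect to require the most care, is preventing two kinds of ``junk'' maximal subfamilies: finite (in particular singleton) maximal cliques, which would be computable and hence not of hyperimmune degree, and subfamilies that simply drop the sentinels and thereby sidestep the diagonalization. I would address the former by arranging that every $A_i$ is compatible with infinitely many other sets, so that no finite subfamily can be maximal. For the latter, the sentinel structure must be rich enough that, no matter which compatible sets $B$ contains, infinitely many $S_e$ can always be consistently added, and hence by maximality are in $B$. A related subtlety is the behavior when $\Phi_e(e) \uparrow$: the construction must degenerate gracefully in this case, without leaving room for a computable choice of $n_e$. Handling this will likely require some form of approximation or guessing of totality, reminiscent of the finite-injury priority arguments used for related principles in reverse mathematics.

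Once these technicalities are resolved, the verification is routine: from any maximal $B$ we uniformly $B$-compute $f_B$ as above; the sentinel/maximality argument forces $f_B(e) > \Phi_e(e)$ whenever $\Phi_e(e)\downarrow$; and padding yields that $f_B$ is not dominated by any computable function, so $B$ has hyperimmune degree.
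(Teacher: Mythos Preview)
Your sentinel strategy has a genuine gap that the acknowledged ``hard part'' does not merely complicate but actually breaks. Once you commit to cross-column compatibility (which you must, to rule out finite maximal subfamilies), the subfamily $B = \langle A_{\langle e,0\rangle} : e \in \omega\rangle$ becomes a computable maximal $\overline{D}_2$ subfamily. Indeed, any two of its members intersect by the cross-column links; every other $A_{\langle e,n\rangle}$ with $n\neq 0$ is blocked by $A_{\langle e,0\rangle}$; and every sentinel $S_e$ is blocked as well, because $S_e$ must be kept disjoint from $A_{\langle e,0\rangle}$ (you cannot link $S_e$ to $A_{\langle e,0\rangle}$ early and then ``unlink'' it once $\Phi_e(e)\downarrow\geq 0$, and if you delay all links from $S_e$ into column $e$ until convergence, $S_e$ remains disjoint from $A_{\langle e,0\rangle}$ in every case). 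So $B$ is maximal, computable, and gives $f_B\equiv 0$. The underlying problem is structural: maximality only tells you that each set \emph{outside} $B$ is blocked by something \emph{inside} $B$; it does not let you force specific sets into $B$ via sentinels when $B$ can instead contain the very forbidden sets that block those sentinels. No amount of enriching the sentinel structure repairs this, since the adversary simply picks the bottom of each column.

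The paper's argument proceeds quite differently and avoids trying to constrain which column-elements $B$ contains. It defines the escape function $f$ from the \emph{rate of growth} of $B$'s own enumeration: $f(a)$ is roughly the first stage by which an initial segment of $J$ has picked up a designated ``potential set'' for each $c\leq a$. The diagonalization is then against domination, not against a single value: assuming $f\leq\Phi_e$, a ``trap set'' $A_{t_e}$ is shown, via a careful timing argument about $e$-progressive stages, to intersect every member of $B$ (so maximality forces $A_{t_e}\in B$) while simultaneously being disjoint from some set enumerated by every relevant initial segment of $J$ at the moment that segment becomes bounded (so $J$ can never actually enumerate $A_{t_e}$). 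The point is that the construction reacts to \emph{how} $B$ enumerates, not to \emph{what} it enumerates; this is what lets it handle an arbitrary maximal $B$.
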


\noindent As there is an $\omega$-model of $\WKL$ consisting entirely of sets of hyperimmune-free degree, this yields:

\begin{corollary}\label{cor_WKLnottoFIP}
The principle $\overline{D}_2\IP$ is not provable in $\WKL$.
\end{corollary}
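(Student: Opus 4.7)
The plan is to construct the computable family $A = \langle A_i : i \in \omega\rangle$ using the convention of Remark~\ref{rem_FIPsubfamilies}, with an intersection graph on $\omega$ arising from a partition into dynamically chosen consecutive finite intervals $V_0, V_1, V_2, \ldots$ (the \emph{levels}). I arrange $A_i \cap A_j \neq \emptyset$ precisely when $i$ and $j$ lie in different levels; concretely, each $A_i$ contains the label $2i$ together with the odd witness $2\langle \min(i,j),\max(i,j)\rangle + 1$ for every $j$ in a different level than $i$. Within a level the $A_i$ are pairwise disjoint, so any $\overline{D}_2$-subfamily takes at most one index per level, and maximality then forces exactly one per non-empty level. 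Writing $a_\ell$ for the left endpoint of $V_\ell$, a maximal $\overline{D}_2$-subfamily $B$ has index set $I_B$ that is a maximal clique, and its principal function $p_{I_B}$ (which is $B$-computable by decoding labels and sorting) satisfies $p_{I_B}(\ell) \geq a_\ell$.

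To force $p_{I_B}$ hyperimmune, the construction meets the requirements $R_e$: if $\Phi_e$ is total then $a_{\ell(e)} > \Phi_e(\ell(e))$ for some level $\ell(e)$. At stage $s$, I add vertex $s$ to the current open level $L$ and then check for an unserved $R_e$ with $e \leq s$, $\Phi_{e,s}(L+1)\downarrow = M$, and $M \leq s$; if such an $e$ exists, I close $L$ by setting $a_{L+1} = s+1$, open level $L+1$, and mark every such $R_e$ as served at $L+1$ (each automatically satisfies $a_{L+1} > M = \Phi_e(L+1)$). Since the level assignment of any vertex can be simulated computably, $A$ is computable and visibly nontrivial.

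Verification is a standard finite-injury argument: for any total $\Phi_e$, the opening condition is met once $s$ exceeds both $e$ and $\Phi_e$'s converged value at the next level index; since infinitely many $\Phi_e$ are total and each requirement is served at most once, infinitely many levels are opened. Hence every maximal clique is infinite, and each total $\Phi_e$ yields $p_{I_B}(\ell(e)) \geq a_{\ell(e)} > \Phi_e(\ell(e))$, so $p_{I_B}$ is hyperimmune and $B$ has hyperimmune degree. The main obstacle is coordinating the twin goals of opening infinitely many levels (to keep maximal cliques infinite) while tying each opening to a genuine diagonalization: this is resolved by only opening in response to an unserved requirement becoming eligible, and noting that served requirements drop out permanently while infinitely many total $\Phi_e$ remain unserved to fuel future openings.
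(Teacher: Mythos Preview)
Your construction does not work: the family $A$ you build actually has a \emph{computable} maximal $\overline{D}_2$ subfamily, so it cannot witness Theorem~\ref{thm_FIPtoOPT} (on which the corollary rests). The intersection graph you describe is complete multipartite with parts $V_\ell = [a_\ell, a_{\ell+1})$, and since the level assignment is decided by a computable stage-by-stage procedure, the sequence $(a_\ell)$ is computable. The maximal cliques are exactly the transversals, and the transversal $\{a_\ell : \ell \in \omega\}$ is computable; the corresponding subfamily $\langle A_{a_\ell} : \ell \in \omega\rangle$ is then a computable maximal $\overline{D}_2$ subfamily of $A$.

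The error shows up directly in your proposed witness: for \emph{any} maximal $B$, the index set $I_B$ is a transversal, so $p_{I_B}(\ell) \in V_\ell$ and hence $p_{I_B}(\ell) < a_{\ell+1}$. Thus $p_{I_B}$ is dominated by the computable function $\ell \mapsto a_{\ell+1}$ and is never hyperimmune. Your diagonalization is correspondingly circular: since $(a_\ell)$ is itself a total computable function, taking $\Phi_e(\ell) = a_\ell$ yields a requirement $R_e$ demanding $a_{\ell(e)} > a_{\ell(e)}$. Your verification asserts that ``the opening condition is met once $s$ exceeds $\Phi_e$'s converged value at the next level index,'' but for this $\Phi_e$ the value $\Phi_{e,s}(L+1)$ cannot converge until level $L+1$ has already been opened, so $R_e$ is never served. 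The paper's construction is necessarily more delicate: its trap sets $A_{t_e}$ and potential sets $A_{p_{e,n}}$ make the intersection pattern depend dynamically on which sets a prospective subfamily has enumerated so far, precisely to rule out any computable transversal being read off in advance.
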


In the proof of the theorem, we build a computable family $A = \langle A_i : i \in \omega \rangle$ by stages, letting $A_{i,s}$ be the set of elements enumerated into $A_i$ by stage $s$, which will always be finite. As usual, we initially put $2i$ into $A_i$ for every $i$, and then put in only odd numbers. We call a number \emph{fresh} at stage $s$ if it is larger than $s$ and every number seen during the construction so far; we call a set $A_i$ \emph{fresh} if $i$ is. In particular, if $A_i$ is fresh at stage $s$ then $A_{i,s}$ will be disjoint from $A_{j,s}$ for all $j \neq i$. Whenever we speak of making some $A_i$ and $A_j$ intersect, we shall mean enumerating some fresh odd number into both sets.

To motivate the proof of the theorem, we first discuss the simpler construction of an $A$ with no computable maximal subfamily with the $\overline{D}_2$ intersection property. By Remark \ref{rem_FIPsubfamilies}, it suffices to ensure, for every $e$, that $\langle A_{\Phi_e(j)}: j \in \omega \rangle$ is not a maximal subfamily. Say $\Phi_e$ \emph{enumerates} $A_i$ at stage $s$ if $\Phi_{e,s}(a) = i$ for some $a \leq s$; say it enumerates $A_i$ \emph{before} $A_j$ if $\Phi_e(a) = i$ and $\Phi_e(b) = j$ for some $a < b$. By ignoring computations if necessary, we adopt the convention that if $\Phi_e$ enumerates $A_i$ and $A_j$ at stage $s$, some number $\leq s$ belongs to $A_{i,s} \cap A_{j,s}$.

The strategy is to define a sequence of \emph{potential sets} $A_{p_{e,0}}, A_{p_{e,1}}, \ldots$ and a \emph{trap set} $A_{t_e}$ for $\Phi_e$, as follows. We wait for $\Phi_e(0)$ to converge, and if and when this happens, we define $A_{p_{e,0}}$ and $A_{t_e}$ to be the two least-indexed fresh sets. Having defined $A_{p_{e,n}}$, we intersect it with each $A_i$ enumerated by $\Phi_e$ until such a stage, if there is one, that it is itself enumerated. We call such a stage \emph{$e$-progressive}. We then let $A_{p_{e,n+1}}$ be the least-indexed fresh set. We only add elements to $A_{t_e}$ at $e$-progressive stages: if $A_{p_{e,n}}$ is the most recently enumerated potential set at such a stage, we intersect $A_{t_e}$ with all sets enumerated strictly earlier.

Now suppose $\Phi_e$ is total and defines a maximal subfamily with the $\overline{D}_2$ intersection property. Each $A_{p_{e,n}}$ is intersected with all $A_i$ enumerated by $\Phi_e$ until it is itself enumerated, which occurs at the next $e$-progressive stage. By maximality of the subfamily, then, there must be infinitely many $e$-progressive stages. Now at every such stage, we intersect $A_{t_e}$ with all the $A_i$ enumerated thus far, except for the latest $A_{p_{e,n}}$ enumerated at that stage. Thus, in the end, $A_{t_e}$ is intersected with every enumerated set, so it too must belong to the subfamily. However,  each enumerated $A_{p_{e,n}}$ is kept disjoint from $A_{t_e}$ until the enumeration of $A_{p_{e,n+1}}$, so at every stage at which $A_{t_e}$ is defined there is some enumerated $A_i$ that it does not intersect. By our convention, there is therefore no stage at which $A_{t_e}$ can be enumerated by $\Phi_e$, a contradiction.

We use the same basic idea in the proof of the theorem. The role of functionals $\Phi_e$ in enumerating members of a potential subfamily will be played by strings $\sigma \in \omega^{<\omega}$: we say $\sigma$ \emph{enumerates} $A_i$ if $\sigma(a) = i$ for some $a < |\sigma|$; we say $A_i$ is enumerated \emph{before} $A_j$ if $\sigma(a) = A_i$ and $\sigma(b) = A_j$ for some $a < b$. Our goal will be to be ensure that every maximal subfamily of $A$ with the $\overline{D}_2$ intersection property computes a function not dominated by any computable function. To this end, potential sets and trap sets will be defined for strings in a more elaborate way. In particular, we will no longer allow for some trap sets and potential sets to be undefined, as could happen above if $\Phi_e$ was not total.

\begin{proof}[Proof of Theorem~\ref{thm_FIPtoOPT}] Say a nonempty string $\sigma \in \omega^{<\omega}$ is \emph{bounded} by $s \in \omega$ if:
\begin{itemize}
\item $|\sigma| \leq s$;
\item for all $a < |\sigma|$, $\sigma(a) \leq s$;
\item if $|\sigma| > 1$ then $s > 0$ and for all $a,b < |\sigma|$, some number $\leq s-1$ belongs to $A_{\sigma(a),s-1} \cap A_{\sigma(b),s-1}$.
\end{itemize}
At each stage, we will we have defined finitely many sets $A_{p_{e,n}}$, each labeled as either a \emph{type 1 potential set} or a \emph{type 2 potential set} for some (not necessarily the same) $\sigma \in \omega^{<\omega}$. When more than one $e$ is being discussed, we refer to $A_{p_{e,n}}$ as an \emph{$e$-potential set}. For $e,a \in \omega$, let $s_{e,a} = (\mu s)[\Phi_{e,s}(a) \downarrow]$. We may assume that if $s_{e,a}$ is defined and $b < a$, then $s_{e,b}$ is defined and $s_{e,b} < s_{e,a}$.

\medskip
\noindent \emph{Construction.} At stage $s \in \omega$, we consider consecutive substages $e \leq s$. At substage $e$, we proceed as follows.

\medskip
\noindent \emph{Step 1.} If $A_{t_e}$ is undefined, define it to be the least-indexed fresh set. If $A_{t_e}$ is defined but $s = s_{e,0}$, redefine $A_{t_e}$ to be the least-indexed fresh set, and relabel any type 1 $e$-potential set as type 2 (for the same string).

\medskip
\noindent \emph{Step 2.} For each $\sigma \in \omega^{<\omega}$ bounded by $s$, choose the least $n$ such that $A_{p_{e,n}}$ is undefined, and define this set to be the least-indexed fresh set. If $\sigma$ enumerates $A_{t_e}$, label $A_{p_{e,n}}$ type 1, and otherwise label it type 2.

\medskip
\noindent \emph{Step 3.} Consider any $A_{p_{e,n}}$ defined at a stage before $s$, and any $\sigma \in \omega^{<\omega}$ bounded by $s$ that extends the string for which $A_{p_{e,n}}$ was defined as a potential set. If $A_{p_{e,n}}$ is type 1, intersect it with every $A_i$ enumerated by $\sigma$; if it is type 2, do this only if $\sigma$ does not enumerate $A_{t_e}$.

\medskip
\noindent \emph{Step 4.} Suppose $s = s_{e,a}$ for some $a$. We say that $\sigma \in \omega^{<\omega}$ is \emph{viable} for $e$ at stage $s$ if there exist $\sigma_0 \prec \cdots \prec \sigma_a = \sigma$ satisfying:
\begin{itemize}
\item $|\sigma_0| = 1$;
\item for each $b \leq a$, $\sigma_b$ is bounded by $s_{e,b}$;
\item for each $b < a$ and $c \leq b$, $\sigma_{b+1}$ enumerates a $c$-potential set for some $\tau$ with $\sigma_b \preceq \tau \prec \sigma_{b+1}$.
\end{itemize}
If $a \leq e$, we do nothing. If $a > e$, let $A_{p_{e,\sigma,a}}$ denote the least-indexed $e$-potential set enumerated by $\sigma$ for some extension of $\sigma_{a-1}$, as above. Then, if every $\sigma$ viable for $e$ enumerates a set $A_i$ satisfying:
\begin{itemize}
\item $A_i$ is enumerated before $A_{p_{e,\sigma,a}}$;
\item $A_i \cap A_{t_e}$ and $A_{p_{e,\sigma,a}} \cap A_{t_e} = \emptyset$ are currently empty;
\item $A_i$ does not equal $A_{p_{e,\tau,a}}$ for any $\tau$ viable for $e$ at stage $s$;
\end{itemize}
we choose the most recently enumerated such $A_i$, and intersect $A_{t_e}$ with it and all sets enumerated by $\sigma$ before it. In this case, we call $s$ \emph{$e$-progressive}.

\medskip
\noindent \emph{Step 5.} For each $i$ and each $n$ less than or equal to the largest number mentioned during the substage, if $n$ was not enumerated into $A_i$ we enumerate it into the complement.

\medskip
\noindent \emph{End construction.}

\medskip
\noindent \emph{Verification.} The family $A$ is computable and nontrivial, and it is not difficult to see that $A_{p_{e,n}}$ is defined for every $e$ and $n$. Likewise, $A_{t_e}$ is defined for every $e$ and is thereafter redefined at most once. We shall use $A_{t_e}$ henceforth to always refer to the final definition.

Suppose $B = \langle B_i : i \in \N \rangle$ is a maximal subfamily of $A$ with the $\overline{D}_2$ intersection property. Choose the unique $J \in \omega^\omega$ such that $B_i = A_{J(i)}$ for all~$i$.

\begin{claim}\label{claim_FIPtoHYP0}
For each $e \in \omega$ and each $\sigma \prec J$, there is an $A_{p_{e,n}} \in B$ that is a potential set for some $\tau$ with $\sigma \preceq \tau \prec J$.
\end{claim}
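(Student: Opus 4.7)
The plan is to prove the claim by contradiction against the maximality of $B$, splitting into two cases based on whether the final $A_{t_e}$ lies in $B$. I fix $\sigma \prec J$ and $e$.

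In Case 1, $A_{t_e} = A_{J(k)}$ for some $k$. I take $\tau = J \res m$ with $m \geq \max(|\sigma|, k+1)$, so $\sigma \preceq \tau \prec J$ and $\tau$ enumerates $A_{t_e}$. Because $B$ has the $\overline{D}_2$ intersection property, any two of its members eventually share a witness enumerated during the construction, so $\tau$ is bounded by all sufficiently large stages; I pick such a stage $s_0 > s_{e,0}$ (or any large stage if $s_{e,0}$ is undefined) and let $A_{p_{e,n}}$ be the fresh $e$-potential set that Step 2 defines for $\tau$ at $s_0$. Since $s_0$ is past any possible redefinition of $A_{t_e}$ and $\tau$ enumerates it, $A_{p_{e,n}}$ is labeled type 1 and keeps that label. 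To see $A_{p_{e,n}} \in B$, I argue that $A_{p_{e,n}}$ meets every $A_{J(i)}$, which forces it into $B$ by maximality. For each $m' \geq |\tau|$, the string $\rho = J \res m'$ is eventually bounded by some stage $s' > s_0$, at which Step 3 unconditionally (because $A_{p_{e,n}}$ is type 1) intersects $A_{p_{e,n}}$ with every $A_{J(i)}$ for $i < m'$; letting $m' \to \infty$ covers every $i$.

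In Case 2, $A_{t_e} \notin B$, so no initial segment of $J$ enumerates the final $A_{t_e}$. I pick any nonempty $\tau$ with $\sigma \preceq \tau \prec J$ (say $\tau = J \res \max(|\sigma|, 1)$), choose a stage $s_0 > s_{e,0}$ at which $\tau$ is bounded, and let $A_{p_{e,n}}$ be the fresh $e$-potential set that Step 2 defines for $\tau$; it is labeled type 2 because $\tau$ does not enumerate the current (= final) $A_{t_e}$. The same intersection argument now applies: at any stage $s' > s_0$, the current $A_{t_e}$ is the final one, and any $\rho = J \res m' \succeq \tau$ eventually bounded by $s'$ fails to enumerate it, so the Step 3 condition for type 2 sets is satisfied and $A_{p_{e,n}}$ is intersected with each $A_{J(i)}$ for $i < m'$. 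Hence $A_{p_{e,n}} \in B$.

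The main obstacle is the bookkeeping around $s_{e,0}$: because $A_{t_e}$ may be redefined once and previously-type-1 potentials relabeled as type 2, labels and the identity of $A_{t_e}$ are only stable at stages past $s_{e,0}$. Working at such stages is precisely what makes the two cases line up with the type 1 and type 2 Step 3 behaviors, after which maximality of $B$ closes the argument.
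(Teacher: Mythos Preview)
Your proof is correct and follows essentially the same approach as the paper: split on whether the final $A_{t_e}$ lies in $B$, choose $\tau \prec J$ accordingly, take an $e$-potential set defined for $\tau$, and use Step~3 together with maximality of $B$ to force that potential set into $B$. The only real difference is cosmetic: you explicitly work at a stage $s_0 > s_{e,0}$ so that the type label and the identity of $A_{t_e}$ are already stable when $A_{p_{e,n}}$ is created, whereas the paper takes the first stage $s \geq e$ bounding $\tau$ and then observes that the type-2 case can only arise when $A_{t_e} \notin B$, so extensions $\upsilon \prec J$ never enumerate it anyway. Your extra care with $s_{e,0}$ is harmless and arguably clearer; just remember to carry the parenthetical ``or any large stage if $s_{e,0}$ is undefined'' into Case~2 as well, since you only stated it in Case~1.
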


\begin{proof} 
Define $\tau$ as follows. If $A_{t_e}$ is not in $B$, or if it is enumerated by $\sigma$, let $\tau = \sigma$. Otherwise, let $\tau$ be an initial segment of $J$ extending $\sigma$ long enough to enumerate $A_{t_e}$. Since the $A_i$ enumerated by $\tau$ intersect pairwise, $\tau$ must be bounded by cofinitely many stages $s$. Thus, at the next such $s \geq e$, some $A_{p_{e,n}}$ is defined as a potential set for $\tau$. Fix such an $A_{p_{e,n}}$, and choose any $\upsilon$ with $\tau \preceq \upsilon \prec J$. At any future stage that bounds $\upsilon$, $A_{p_{e,n}}$ is made to intersect every set enumerated by $\upsilon$. This is so even if $A_{p_{e,n}}$ is a type 2 potential set, because in that case $A_{t_e}$ is not in $B$ and hence it is not enumerated by $\upsilon$. Since $\upsilon$ here is arbitrary, it follows that $A_{p_{e,n}}$ intersects every set in $B$, and hence by maximality that it is in $B$.\end{proof}

We now define a function $f \colon \N \to \N$, and a sequence $\sigma_0 \prec \sigma_1 \prec \cdots$ of initial segments of $J$, as follows. Let $\sigma_0 = J \res 1$ and $f(0) = 2J(0)$. Having defined $f(a)$ and $\sigma_a$ for some $a$, let $f(a+1)$ be the least $s$ such that there is a $\sigma \in \omega^{<\omega}$ satisfying:
\begin{itemize}
\item $\sigma_a \prec \sigma \prec J$;
\item $\sigma$ is bounded by $s$;
\item for each $b \leq a$, $\sigma$ enumerates a $b$-potential set defined by stage $s$ of the construction for some $\tau$ with $\sigma_a \preceq \tau \prec \sigma$.
\end{itemize} 
Let $\sigma_{a+1}$ be the least $\sigma$ satisfying the above conditions. By Claim \ref{claim_FIPtoHYP0}, $f(a)$ and $\sigma_a$ are defined for all~$a$, and it is easy to see that $\sigma_a$ is viable for $e$ at stage $s_{e,a}$.

Clearly, $f \leq_T B$, and we now show that it is not computably dominated. Seeking a contradiction, suppose $f \leq \Phi_e$. By standard conventions, we may assume $\Phi_e(a) \leq s_{e,a}$ for all $a$, and hence that $f(a) \leq s_{e,a}$.

\begin{claim}\label{claim_FIPtoHYP1} 
If $\sigma$ is viable for $e$ at stage $s_{e,a}$, then it enumerates some set that is not intersected with $A_{t_e}$ before the first $e$-progressive stage after $s_{e,a}$.
\end{claim}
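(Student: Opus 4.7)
The plan is to take $P = A_{p_{e,\sigma,a}}$ itself as the witnessing set; that $\sigma$ enumerates $P$ is immediate from the viability condition applied with $b = a - 1$ and $c = e$. Let $\tau$ be the string for which $P$ was defined, so $\sigma_{a-1} \preceq \tau \prec \sigma$, and let $s_0 \leq s_{e,a}$ be the stage at which $P$ was introduced in Step~2; since its index was chosen fresh at $s_0$, this index strictly exceeds $s_0$. Write $s^{*}$ for the first $e$-progressive stage $\geq s_{e,a}$. The goal is then $P \cap A_{t_e} = \emptyset$ at every stage $s < s^{*}$.

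Elements can enter $P \cap A_{t_e}$ only through Step~3 or Step~4 at substage $e$. For Step~3, the key dichotomy is the label on $P$: if $P$ is type~2 then the rule intersects $P$ only with sets enumerated by strings that do not enumerate $A_{t_e}$, so $A_i = A_{t_e}$ is impossible. For Step~4, I would argue stage by stage. At $s_{e,a}$, clause~(1) forces the chosen $A_i$ for $\sigma$ to be enumerated by $\sigma$ strictly before $P$, and clause~(3) forbids the chosen $A_i$ for any viable $\tau'$ from being $P = A_{p_{e,\sigma,a}}$; so Step~4 leaves $P$ alone at $s_{e,a}$. For $s_{e,a} < s < s^{*}$ no $e$-progressive stage occurs at $e$, hence Step~4 is inert on $A_{t_e}$ in this window. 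In the remaining range $s_0 \leq s < s_{e,a}$ only earlier $e$-progressive stages $s_{e,a'}$ with $a' < a$ can matter; since $P$'s index exceeds $s_0$, a viable $\sigma'$ bounded by any $s_{e,a'} < s_0$ cannot even enumerate $P$, and when $s_{e,a'} \geq s_0$ the least-indexed property defining $P$ together with clause~(3) should again rule out $P$ being the chosen $A_i$.

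The main obstacle is the type~1 case, in which $\tau$ enumerates $A_{t_e}$, hence $\sigma$ does too, and Step~3 applied with $\sigma$ at stage $s_{e,a}$ (or, if $s_0 < s_{e,a}$, already at an earlier stage) does intersect $P$ with $A_{t_e}$, defeating this choice. In that case I expect to substitute a different set enumerated by $\sigma$: most naturally, a freshly-introduced $e$-potential set enumerated by $\sigma$ at a position strictly after $\sigma$'s first occurrence of $A_{t_e}$, introduced late enough that no prior Step~3 or Step~4 action could have coupled it to $A_{t_e}$ by stage $s^{*}$. A subsidiary task is verifying that at earlier $e$-progressive stages $s_{e,a'}$ with $s_{e,a'} > s_0$, the sets swept into $A_{t_e}$ by the ``sets enumerated by $\sigma'$ before the chosen $A_i$'' clause of Step~4 cannot include $P$; this should follow from tracking $\sigma'$'s viability chain against the freshness bound on $P$'s index.
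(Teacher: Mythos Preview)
Your plan has a genuine structural gap: the argument must be by induction on $a$, and the inductive hypothesis is exactly what eliminates the type-1 obstacle you flag.

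First, the witness $P = A_{p_{e,\sigma,a}}$ is only defined when $a > e$; viability at $s_{e,a}$ forces $\sigma$ to enumerate an $e$-potential set between $\sigma_{a-1}$ and $\sigma_a$ only when one can take $c = e \leq a-1$. For $a \leq e$ (in particular $a = 0$, where $\sigma$ is a single entry bounded by $s_{e,0}$) your choice of $P$ does not exist, and you give no alternative. The paper handles $a = 0$ separately, using the fact that $A_{t_e}$ is \emph{redefined fresh} at stage $s_{e,0}$, so the single set $A_{\sigma(0)}$ cannot have been intersected with it yet; a delicate case analysis then shows the first intersection can only occur at an $e$-progressive stage.

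Second, your type-1 worry is not a side case to be patched but the heart of the matter. The paper's resolution is that the type-1 case \emph{never occurs}: the string $\tau$ for which $P$ is a potential set extends $\sigma_{a-1}$, which is viable at $s_{e,a-1}$; by the inductive hypothesis, $\sigma_{a-1}$ enumerates a set not yet intersected with $A_{t_e}$ at that stage, and since $\sigma_{a-1}$ is bounded by $s_{e,a-1}$ it therefore cannot enumerate $A_{t_e}$ at all. Hence $P$ is necessarily type~2, and your Step-3 dichotomy collapses to the favorable branch. Your proposed substitute in the type-1 case (``a freshly-introduced $e$-potential set enumerated after $A_{t_e}$'') is not guaranteed to exist: viability gives you no control over where in $\sigma$ the set $A_{t_e}$ appears, and $\sigma$ need not enumerate any $e$-potential set beyond $P$ itself.

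Third, your Step-4 analysis at earlier $e$-progressive stages $s_{e,a'}$ is incomplete. At such a stage $A_{t_e}$ is intersected not only with the chosen $A_i$ but with \emph{all sets enumerated before it}; clause~(3) protects only the sets $A_{p_{e,\tau',a'}}$ for $\tau'$ viable at $s_{e,a'}$, not your $P = A_{p_{e,\sigma,a}}$. The paper sidesteps this entirely: once $P$ is known to be type~2, the only way it can meet $A_{t_e}$ is at Step~4 of an $e$-progressive stage, and one checks directly that clause~(3) at stage $s_{e,a}$ prevents this there, so the next opportunity is the first $e$-progressive stage after $s_{e,a}$.
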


\begin{proof}
First, notice that we only intersect sets at step 3 or step 4 of the construction, and, when doing so at some substage $i$, one of the sets being intersected is always either $A_{t_i}$ or $A_{p_{i,n}}$ for some $n$. Since potential sets and trap sets are always defined to be fresh, we cannot have an $e$-potential set equal to an $i$-potential set if $i \neq e$, or an $e$-potential set equal to $A_{t_i}$, or $A_{t_e}$ equal to $A_{t_i}$. Moreover, if $A_{p_{e,n}}$ is a type 2 potential set, it can only be intersected with $A_{t_e}$ at step 4 of substage $e$ of an $e$-progressive stage. 

We now proceed by induction on $a$. If $a = 0$, viability means that $\sigma$ has length $1$ and that it is bounded by $s_{e,0}$. Thus, $\sigma$ only enumerates one set, $A_{\sigma(0)}$. At stage $s_{e,0}$, $A_{t_e}$ is redefined to be fresh, and so it can only be intersected with $A_{\sigma(0)}$ at some stage $s \geq s_{e,0}$. We may assume without loss of generality that there is no $\sigma'$ viable for $e$ at $s_{e,0}$ such that $A_{\sigma'(0)}$ is intersected with $A_{t_e}$ before stage $s$.

If $A_{\sigma(0)}$ and $A_{t_e}$ are intersected at step 3 of some substage $i$ of $s$, then it must be that $A_{\sigma(0)} = A_{p_{i,n}}$ for some $n$, and that $A_{t_e}$ is enumerated by some $\tau$ bounded by $s$ extending the string $\rho$ for which $A_{p_{i,n}}$ is a potential set. Now since $A_{p_{i,n}} = A_{\sigma(0)}$ is bounded by $s_{e,0}$, it must have been defined as a potential set for $\rho$ at a stage $\leq s_{e,0}$. At that stage, $\rho$ must have been bounded, and so it must also be bounded at $s_{e,0}$. This means that $A_{\rho(n)}$ cannot equal $A_{t_e}$ for any $n < |\rho|$, again because $A_{t_e}$ is redefined at $s_{e,0}$. It also means that each $\rho(n)$ is viable for $e$ at $s_{e,0}$, so by our assumption, $A_{\rho(n)}$ cannot be intersected with $A_{t_e}$ before stage $s$. But then $\tau$ cannot be bounded by $s$, a contradiction.

Thus, if $A_{\sigma(0)}$ and $A_{t_e}$ are intersected, it is at step 4 of some substage $i$. If $i \neq e$, then $A_{\sigma(0)}$ must be $A_{t_i}$, and $A_{t_e}$ must be enumerated by some $\tau$ viable for $i$ at stage $s$. Also, $s$ must be $i$-progressive, so $s_{i,0}$ must be defined. Since $A_{t_i}$ is redefined at stage $s_{i,0}$ and $A_{\sigma(0)}$ is bounded by $s_{e,0}$, it follows that $s_{i,0} \leq s_{e,0}$. As $\tau(0)$ is bounded by $s_{i,0}$ by definition of viability, it must thus also be bounded by $s_{e,0}$. As above, this means that $A_{\tau(0)}$ cannot equal $A_{t_e}$, and that therefore the two sets cannot be intersected before stage $s$. This again contradicts that $\tau$ is bounded by $s$, and we conclude that $i = e$. In other words, $s$ is $e$-progressive, as desired.

Now take $a > 0$ and assume the claim holds for $a-1$, and suppose $\sigma$ is viable for $e$ at stage $s_{e,a}$. If $s_{e,a}$ is not $e$-progressive, then the first $e$-progressive stage after $s_{e,a}$ is the same as the first $e$-progressive stage after $s_{e,a-1}$. In this case, then, the claim follows just from the fact that some initial segment of $\sigma$ is viable for $e$ at stage $s_{e,a-1}$. If $s_{e,a}$ is $e$-progressive, then consider the set $A_{p_{e,\sigma,a}}$ enumerated by $\sigma$. This is by definition an $e$-potential set for some initial segment of $\sigma$ viable at stage $s_{e,a-1}$. As this initial segment is bounded by $s_{e,a-1}$ and, by inductive hypothesis, enumerates a set that does not intersect $A_{t_e}$ at that stage, it cannot enumerate $A_{t_e}$. Hence, $A_{p_{e,\sigma,a}}$ is of type 2. By definition of $A_{p_{e,\sigma,a}}$ is not intersected with $A_{t_e}$ at step 4 of substage $e$ of stage $s_{e,a}$, so by the remark above, this intersection can only take place at the next $e$-progressive stage.
\end{proof}

\begin{claim}\label{claim_FIPtoHYP2}
There are infinitely many $e$-progressive stages.
\end{claim}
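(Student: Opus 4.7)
The plan is to prove the claim by contradiction. Assume only finitely many stages are $e$-progressive, and let $s^{*}$ denote the largest such stage (taking $s^{*}=0$ if none exist). Fix any $a$ with $s_{e,a}>s^{*}$, so that no $e$-progressive stage occurs at or after $s_{e,a}$.

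First, I would apply Claim~\ref{claim_FIPtoHYP1} to $\sigma_{a}$, which is viable for $e$ at stage $s_{e,a}$ by the construction of $\sigma_{a}$ from the definition of $f$. This produces a set $A_{i}$ enumerated by $\sigma_{a}$ that is not intersected with $A_{t_{e}}$ before the first $e$-progressive stage after $s_{e,a}$; since no such later stage exists, $A_{i}\cap A_{t_{e}}=\emptyset$ in the final family. Because $\sigma_{a}\prec J$, we have $A_{i}\in B$, so $B$ contains a set that is disjoint from $A_{t_{e}}$.

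The remaining task is to derive a contradiction from the existence of this $A_{i}$. The approach is to show that $A_{t_{e}}$ must itself belong to $B$, so that $A_{t_{e}}$ and $A_{i}$ are two members of $B$ whose intersection is empty, contradicting the $\overline{D}_{2}$ intersection property of $B$. To establish $A_{t_{e}}\in B$, it suffices by the maximality of $B$ to rule out $A_{t_{e}}\notin B$; I would do this by showing that the hypothesis $A_{t_{e}}\notin B$ actually forces some $s_{e,a'}>s^{*}$ to be $e$-progressive. In that case, no $\sigma\prec J$ enumerates $A_{t_{e}}$, so every $e$-potential set defined for an initial segment of $J$ after stage $s_{e,0}$ is labelled type~2 and is kept disjoint from $A_{t_{e}}$ by step~3. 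Combining this with an inductive analysis akin to that used in the proof of Claim~\ref{claim_FIPtoHYP1}, I would verify that at some sufficiently late $s_{e,a'}$, every string $\sigma$ viable for $e$ at that stage enumerates a witness $A_{i}$ satisfying all three conditions in step~4—so that $s_{e,a'}$ is $e$-progressive.

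The main obstacle is the last verification. Producing a common witness for \emph{every} viable $\sigma$ at a single stage $s_{e,a'}$, rather than just for $\sigma_{a'}$, requires a careful inventory of the finitely many previous intersections $A_{t_{e}}$ received at step~4 of earlier $e$-progressive stages, of the limited role played by type~1 potential sets when $A_{t_{e}}\notin B$, and of the freshness conventions governing potential sets—specifically in order to ensure that the chosen witness $A_{i}$ for $\sigma$ is enumerated strictly before $A_{p_{e,\sigma,a'}}$, is currently disjoint from $A_{t_{e}}$, and is not equal to $A_{p_{e,\tau,a'}}$ for any other viable $\tau$.
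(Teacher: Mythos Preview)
Your overall contradiction strategy is sound, but the route through $B$ is a detour that does not buy you anything and obscures where the real work lies. The paper's proof of this claim never mentions $B$, $J$, or the $\sigma_a$ at all: it is a purely combinatorial fact about the construction, valid as soon as $\Phi_e$ is total. Your hypothesis $A_{t_e}\notin B$ only tells you something about initial segments of $J$ (namely that they do not enumerate $A_{t_e}$, so the $e$-potential sets they spawn are type~2). But $e$-progressiveness at $s_{e,a'}$ is a condition on \emph{every} viable $\sigma$ at that stage, and most of these are not initial segments of $J$; so the extra hypothesis is irrelevant to what you correctly flag as the main obstacle. In fact, $A_{t_e}\notin B$ holds unconditionally (the paper proves it right after this claim using only Claim~\ref{claim_FIPtoHYP1}), so your case split is vacuous on one side.

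The paper's direct argument runs as follows. Fix $s_{e,a}$ with $a>e$ and suppose no later stage is $e$-progressive. By Claim~\ref{claim_FIPtoHYP1}, every $\sigma$ viable at $s_{e,a}$ enumerates some $A_{i_\sigma}$ that is \emph{never} intersected with $A_{t_e}$. Now for $b>a$ and $\sigma$ viable at $s_{e,b}$, the set $A_{p_{e,\sigma,b}}$ is a potential set for some $\tau\prec\sigma$ viable at $s_{e,b-1}$; by the inductive observation inside the proof of Claim~\ref{claim_FIPtoHYP1}, $\tau$ does not enumerate $A_{t_e}$, so $A_{p_{e,\sigma,b}}$ is type~2 and can meet $A_{t_e}$ only at an $e$-progressive stage. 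Freshness forces $A_{p_{e,\sigma,b}}$ to be defined strictly later than $A_{p_{e,\tau,b-1}}$, so for $b$ large enough every $A_{p_{e,\sigma,b}}$ is defined after $s_{e,a}$ and is distinct from every set enumerated by any string viable at $s_{e,a}$. At such a $b$, every viable $\sigma$ has a witness $A_i$ (coming from the $A_{i_{\sigma'}}$ for the relevant initial segment) satisfying all three bullets of Step~4, making $s_{e,b}$ $e$-progressive---the contradiction. Your ``careful inventory'' paragraph gestures at exactly this, but the key moves---that type~2 status propagates down the chain of viable strings via Claim~\ref{claim_FIPtoHYP1}, and that freshness eventually separates the $A_{p_{e,\sigma,b}}$ from all earlier-enumerated sets---need to be stated, and none of them use $B$.
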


\begin{proof}
Fix any stage $s_{e,a}$ with $a > e$, and assume there is no later $e$-progressive stage. For each $\sigma$ viable for $e$ at $s_{e,a}$, let $A_{i_\sigma}$ be the greatest-indexed set satisfying the conclusion of the statement of the preceding claim. Thus, $A_{i_{\sigma}}$ can never be intersected with $A_{t_e}$. Now for each $b > a$, and each $\sigma$ viable for $e$ at stage $s_{e,b}$, $A_{p_{e,\sigma,b}}$ is a potential set for some $\tau \prec \sigma$ viable for $e$ at stage $s_{e,b-1}$. Furthermore, since potential sets are always defined to be fresh, it follows that $A_{p_{e,\sigma,b}}$ is defined strictly later than $A_{p_{e,\tau,b-1}}$ in the course of the construction. We conclude that if $b$ is sufficiently large, then $A_{p_{e,\sigma,b}}$ does not equal any of the sets enumerated by the strings viable for $e$ at stage $s_{e,a}$. As observed above, each such $A_{p_{e,\sigma,b}}$ is a type 2 potential set, and so it can only be intersected with $A_{t_e}$ at an $e$-progressive stage. Thus, if $b$ is also chosen large enough that each $A_{p_{e,\sigma,b}}$ is defined after stage $s_{e,a}$, then $A_{p_{e,\sigma,b}}$ and $A_{t_e}$ will be forever disjoint. This implies that $s_{e,b}$ is $e$-progressive, a contradiction.
\end{proof}

We now complete the proof of the theorem as follows. First note that $A_{t_e} \notin B$. Otherwise, it would have to be enumerated by $\sigma_a$ for some $a$. But $\sigma_a$ is viable, and hence bounded, at stage $s_{e,a}$, so all sets it enumerates would need to intersect $A_{t_e}$ at stage $s_{e,a}$, contrary to Claim~\ref{claim_FIPtoHYP1}. Now consider any $e$-progressive stage $s_{e,a}$. By the construction at step 4 of substage $e$, there is some $A_i$ enumerated by $\sigma_a$ that is disjoint from $A_{t_e}$ at the beginning of the stage, and that, along with all sets enumerated by $\sigma_a$ before it, is intersected with $A_{t_e}$ by the end of the stage. Since there are infinitely many $e$-progressive stages by Claim~\ref{claim_FIPtoHYP2}, and since $J = \bigcup_a \sigma_a$, it follows that $A_{J(a)}$ intersects $A_{t_e}$ for all~$a$. This contradicts the maximality of~$B$.
\end{proof}

\section{Relationships with other principles}\label{sec_FIPandotherprinc}

By the preceding results, $F\IP$ and the principles $\overline{D}_n\IP$ are of the irregular variety that do not admit reversals to any of the main subsystems of $\mathsf{Z}_2$. Many principles of this kind have been studied in the literature, and collectively they form a rich and complicated structure. (A partial summary is given by Hirschfeldt and Shore~\cite[p.~199]{HS-2007}, with additional discussions by Montalb\'{a}n~\cite[Section 1]{Montalban-TA} and Shore~\cite{Shore-2010}.) In this section, we show that the intersection principles lie near the bottom of this structure.

Theorem \ref{thm_FIPtoOPT} gives us a lower bound on the strength of $\overline{D}_2\IP$. Examining the proof, we note that the construction there is computable, and that showing that the function $f$ defined in the verification is total and not computably dominated requires only $\Sigma^0_1$ induction. (See \cite[Definition VII.1.4]{Simpson-2009} for the formalizations of Turing reducibility and equivalence in $\RCA$.) We thus obtain the following:

\begin{corollary}\label{cor_FIPtoOPT}
Over $\RCA$, $\overline{D}_2\IP$ implies the principle $\HYP$, which asserts that for every $S$, there is a set of degree hyperimmune relative to $S$.
\end{corollary}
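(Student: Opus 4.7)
The plan is to formalize the proof of Theorem~\ref{thm_FIPtoOPT} relative to an arbitrary set $S$. Given $S$, repeat the construction of the theorem verbatim but replace each Turing functional $\Phi_e$ by its relativization $\Phi_e^S$. The resulting family $A = \langle A_i : i \in \N \rangle$ is $S$-computable and hence exists in $\RCA$ by $\Delta^0_1$ comprehension with $S$ as a parameter. Applying $\overline{D}_2\IP$ to $A$ yields a maximal subfamily $B$, and the formalization of Remark~\ref{rem_FIPsubfamilies} supplies a function $J$ with $J \equiv_T B$ and $B_i = A_{J(i)}$ for all $i$.

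With $A$, $B$, and $J$ available as parameters, I would mirror the verification of Theorem~\ref{thm_FIPtoOPT} to define a function $f \leq_T B$ together with the sequence $\sigma_0 \prec \sigma_1 \prec \cdots$ of initial segments of $J$. Claim~\ref{claim_FIPtoHYP0}, whose proof uses only the maximality of $B$ together with a bounded search for a stage witnessing that the required $\tau$ is bounded, formalizes directly in $\RCA$; $\Sigma^0_1$ induction then suffices to show that $f(a)$ and $\sigma_a$ are defined for every $a$. Toward a contradiction, assume that some $\Phi_e^S$ dominates $f$. The relativized versions of Claims~\ref{claim_FIPtoHYP1} and~\ref{claim_FIPtoHYP2} go through unchanged, producing infinitely many $e$-progressive stages; the closing argument of the theorem---that $A_{t_e}$ must intersect $A_{J(a)}$ for every $a$ yet cannot lie in $B$---then yields a contradiction with the maximality of $B$.

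The step I would watch most carefully is the induction strength used in Claims~\ref{claim_FIPtoHYP1} and~\ref{claim_FIPtoHYP2}. Each is proved by induction on $a$, with an inductive hypothesis asserting facts about the finitely many sets and strings defined by stage $s_{e,a}$. Once $S \oplus B$ is carried as a parameter, this hypothesis is a bounded statement recursive in $S \oplus B$, so the induction is $\Sigma^0_1$ and available in $\RCA$. Consequently the function $f \leq_T B$ is not dominated by any $S$-computable function, so $B$ has degree hyperimmune relative to $S$, yielding $\HYP$.
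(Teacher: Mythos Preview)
Your proposal is correct and follows essentially the same approach as the paper: the paper's proof of the corollary is just the observation (in the paragraph preceding the statement) that the construction of Theorem~\ref{thm_FIPtoOPT} is computable and that the verification that $f$ is total and not computably dominated needs only $\Sigma^0_1$ induction, which is exactly what you spell out with the relativization to $S$ made explicit. One small inaccuracy: Claim~\ref{claim_FIPtoHYP2} is not itself proved by induction on $a$ but by a direct contradiction argument invoking Claim~\ref{claim_FIPtoHYP1}, so your remark that ``each is proved by induction on $a$'' applies only to Claim~\ref{claim_FIPtoHYP1}.
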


The reverse mathematical strength of $\HYP$ was examined by Hirschfeldt, Shore, and Slaman \cite{HSS-2009} in their investigation of certain model-theoretic principles related to the atomic model theorem $(\AMT)$. Specifically, they showed \cite[Theorem 5.7]{HSS-2009} that $\HYP$ is equivalent to the omitting partial types principle $(\OPT)$, a weaker form of $\AMT$ asserting that every complete, consistent theory has a model omitting the nonprincipal members of a given set of partial types. (See \cite[pp.~5808,~5831]{HSS-2009} for complete definitions, and \cite[Section II.8]{Simpson-2009} for a general development of model theory in $\RCA$.)

Thus, Corollary \ref{cor_FIPtoOPT} provides a connection between model-theoretic principles on the one hand, and set-theoretic principles, namely the intersection principles, on the other. We can extend this to an even firmer relationship. The following principle was introduced by Hirschfeldt, Shore, and Slaman~\cite[p.~5823]{HSS-2009}. They showed that it strictly implies $\AMT$ over $\RCA$, but that $\AMT$ implies it over $\RCA + \I\Sigma^0_2$ (see \cite{HSS-2009}, Theorem~4.3, Corollary~4.5, and p.~5826).

\begin{principle}[$\Pi^0_1$ genericity principle $(\Pi^0_1\mathsf{G})$]
For any uniformly $\Pi^0_1$ collection of sets $D_i$, each of which is dense in $2^{<\N}$, there is a set $G$ such that for every $i$, $G \res s \in D_i$ for some~$s$.
\end{principle}

\begin{proposition}\label{prop_Pi01GtoFIP} 
$\Pi^0_1\mathsf{G}$ implies $F\IP$ over $\RCA$.
\end{proposition}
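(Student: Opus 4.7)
The plan is to build a maximal subfamily by $\Pi^0_1$-genericity on $2^{<\N}$, using an encoding that bakes explicit nonemptiness witnesses into the conditions so that $F$-consistency is automatic and only maximality needs to be forced. This matters because ``$F$-consistency of a finite subfamily'' is $\Sigma^0_1$, which cannot in general be forced by $\Pi^0_1$ dense sets, whereas once witnesses are built into the encoding, ``maximality fails at index $i$'' itself becomes $\Sigma^0_1$, so the negation we want forced is $\Pi^0_1$.

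Given a nontrivial family $A = \langle A_i : i \in \N \rangle$, I would fix a primitive recursive prefix-free decoding $\tau \mapsto (\sigma_\tau, w_\tau)$ on $2^{<\N}$ under which $\tau$ is parsed as a sequence of instructions of the form ``append index $n$ together with a witness function $w$ specifying, for every new nonempty subset $H$ of the running index list extended by $n$ that contains $n$, a candidate element $w(H)$.'' The decoder performs the decidable check $w(H) \in A_j$ for all $j \in H$ and silently skips any instruction that fails; consequently $\sigma_\tau$ is always $F$-intersection consistent, certified by $w_\tau$.

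For each $i \in \N$, I would then set
\[
D_i = \bigl\{\tau \in 2^{<\N} : i \in \ran(\sigma_\tau)\ \text{or}\ (\exists F \subseteq \ran(\sigma_\tau))\,\bigcap_{j \in F \cup \{i\}} A_j = \emptyset\bigr\}.
\]
The first disjunct is $\Delta^0_1$, while the second is a bounded disjunction over subsets of the finite set $\ran(\sigma_\tau)$ of the $\Pi^0_1$ assertion ``this intersection is empty,'' so $\{D_i\}$ is uniformly $\Pi^0_1$. Density is by classical dichotomy: if $\ran(\sigma_\tau) \cup \{i\}$ has the $F$ intersection property, pick witnesses for all new required subsets and extend $\tau$ to carry an ``append $i$'' instruction (first completing any partially written instruction by repeating an already present index, so as not to disturb consistency), landing in $D_i$ through the first disjunct; otherwise the required $F$ already lives in $\ran(\sigma_\tau)$, so $\tau$ itself is already in $D_i$.

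Applying $\Pi^0_1\mathsf{G}$ yields a $G \in 2^\N$ meeting every $D_i$. Setting $J = \sigma_G$ (padded by the finite-subfamily convention from the introduction if $\sigma_G$ happens to be finite), the set $I = \ran(J)$ is $F$-intersection consistent, because any finite subset of $I$ already lies in some $\ran(\sigma_{G \res s})$ where it comes with an explicit witness, and $I$ is maximal, because for each $i$ the prefix of $G$ in $D_i$ either already has $i$ in its range or exhibits a finite $F \subseteq I$ with $\bigcap_{j \in F \cup \{i\}} A_j = \emptyset$; so $\langle A_{J(k)} : k \in \N \rangle$ is the desired maximal subfamily. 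The main obstacle is the upfront engineering of the prefix-free encoding so that every binary string decodes to a well-formed consistent condition and extensions of binary strings really do correspond to extensions of conditions, so that the density argument produces a genuine element of $D_i \subseteq 2^{<\N}$ rather than merely of some abstract forcing poset; once this is pinned down, both the $\Pi^0_1$ complexity calculation for $D_i$ and the density verification are straightforward inside $\RCA$.
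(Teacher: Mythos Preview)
Your approach is correct and essentially the same as the paper's: the paper also encodes witnessed $F$-consistent finite index sequences into binary strings (via a bijection $c\colon\N\to\N^{<\N}$, with a greedy ``acceptable sequence'' extraction playing the role of your parse-and-silently-skip decoder) and defines $D_i$ identically as ``$i$ is already listed, or some intersection involving $i$ is empty.'' One simplification worth noting is that the paper records at each step only a single bound witnessing nonemptiness of the \emph{full} current intersection---such a witness automatically serves for every subintersection---so your per-subset witness function $w$ is more bookkeeping than you need and complicates your encoding (the domain of $w$ would depend on which earlier instructions were skipped), but the idea is the same.
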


\begin{proof} 
We argue in $\RCA$. Let a nontrivial family $A = \langle A_i : i \in \N \rangle$ be given. We may assume $A$ has no finite maximal subfamily with the $F$ intersection property. Fix a bijection $c \colon \N \to \N^{<\N}$. Given $\sigma \in 2^{<\N}$, we say that a number $n < |\sigma|$ is \textit{acceptable for} $\sigma$ if:
\begin{itemize}
\item $\sigma(n) = 1$;
\item $c(n) = \tau b$, which we call the \emph{witness} of $x$, where:
\begin{itemize}
\item $\tau \in \N^{<\N}$,
\item $b \in \N$,
\item and some number $\leq b$ belongs to $\bigcap_{i < |\tau|} A_{\tau(i)}$.
\end{itemize}
\end{itemize} 
We define the \emph{{acceptable} sequence} of $\sigma$ to be either the empty string if there is no acceptable number for $\sigma$, or else the longest sequence $n_0 \cdots n_k \in \N^{<\N}$, $k \geq 0$, such that:
\begin{itemize}
\item $n_0$ is the least acceptable number for $\sigma$;
\item each $n_i$ is acceptable, say with witness $\tau_i b_i$;
\item for each $i < k$, $n_{i+1}$ is the least acceptable $n > n_i$ such that if $\tau b$ is its witness then $\tau \succ \tau_i$.
\end{itemize}
Note that $\Sigma^0_0$ comprehension suffices to prove the existence of a function $2^{<\N} \to \N^{<\N}$ which assigns to each $\sigma \in 2^{<\N}$ its acceptable sequence.

Now for each $i \in \N$, let $D_i$ be the set of all $\sigma \in 2^{<\N}$ that have a nonempty acceptable sequence $n_0\cdots n_k$, and if $\tau b$ is the witness of $n_k$ then:
\begin{itemize}
\item either $\tau(m) = i$ for some $m < |\tau|$,
\item or $A_i \cap \bigcap_{m < |\tau|} A_{\tau(m)} = \emptyset$.
\end{itemize} 
The $D_i$ are clearly uniformly $\Pi^0_1$, and it is not difficult to see that they are dense in $2^{<\N}$.
%Indeed, let $\sigma \in 2^{<\N}$ be given, and define $b$, $j$, and $x$ as follows: if the acceptable sequence of $\sigma$ is empty, choose the least $j \geq i$ such that $A_j \neq \emptyset$ and let $b \geq \min A_j$ be large enough that $x = c^{-1}(jb) \geq |\sigma|$; if the acceptable sequence of $\sigma$ is some nonempty string $x_0 \cdots x_n$ and $\tau b_n$ is the witness of $x_n$, choose the least $j \geq i$ such that $A_j \cap \bigcap_{k < |\tau|} A_{\tau(k)} \neq \emptyset$ and let $b \geq \min A_j \cap \bigcap_{k < |\tau|} A_{\tau(k)}$ be large enough that $x = c^{-1}(\tau jb) \geq |\sigma|$. In either case, $j$ exists because of our assumption that $A$ is nontrivial and has no finite maximal subfamily with the $F$ intersection property. Now define $\widehat{\sigma} \in 2^{<\N}$ of length $x+1$ by
%\[
%\widehat{\sigma}(y) =
%\begin{cases} 
%\sigma(y) & \text{if } y < |\sigma|,\\ 
%0 & \text{if }|\sigma| \leq y < x,\\
%1 & \text{if } y = x,
%\end{cases}
%\]
%to get an extension of $\sigma$ that belongs to $D_i$.
Apply $\Pi^0_1\mathsf{G}$ to the $D_i$ to obtain a set $G$ such that for all $i$, there is an $s$ with $G \res s \in D_i$. By definition, each such $s$ must be nonzero as $G \res s$ must have nonempty acceptable sequence. Note also that if $s \leq t$ then the acceptable sequence of $G \res t$ extends (not necessarily properly) the acceptable sequence of $G \res s$. Our assumption that $A$ has no finite maximal subfamily with the $F$ intersection property implies that the acceptable sequences of initial segments of $G$ are arbitrarily long.

Now fix the least $s$ such that $G \res s$ has a nonempty acceptable sequence, and for each $t \geq s$, if $n_0\cdots n_k$ is the acceptable sequence of $G \res t$, let $\tau_t b_t$ be the witness of $n_k$. By the preceding paragraph, we have $\tau_t \preceq \tau_{t+1}$ for all $t$, and $\lim_t |\tau_t| = \infty$. Let $J = \bigcup_{t \geq s} \tau_t$, which exists by $\Sigma^0_0$ comprehension. It is then straightforward to check that $B = \langle A_{J(i)}: i \in \N \rangle$ is a maximal subfamily of $A$ with the $F$ intersection property.
\end{proof}

By Corollary 3.9 of \cite{HSS-2009}, there is an $\omega$-model of $\AMT$, and hence of $\Pi^0_1\mathsf{G}$, that is not a model of $\WKL$. Hence, $F\IP$ does not imply $\WKL$, and so in view of Corollary \ref{cor_WKLnottoFIP} the two are incomparable. $F\IP$ also inherits from $\Pi^0_1\mathsf{G}$ conservatively for restricted $\Pi^1_2$ sentences, i.e., those of the form $(\forall X)[\varphi(X) \to (\exists Y)\psi(X,Y)]$, where $\varphi$ is arithmetical and $\psi$ is $\Sigma^0_3$. (This fact can also be established directly, by replacing the forcing notion in the proofs of Proposition 3.14 and Corollary 3.15 of \cite{HSS-2009} by the notion defined in Proposition \ref{prop_FIPnotrevACA0} above.) It follows, for example, that $F\IP$ does not imply any of the combinatorial principles related to Ramsey's theorem for pairs studied by Cholak, Jockusch, and Slaman \cite{CJS-2001} or Hirschfeldt and Shore \cite{HS-2007}.

We do not know whether the preceding proposition can be strengthened to show that $F\IP$ follows from $\AMT$ over $\RCA$. We also do not know whether $\HYP$ implies $F\IP$, although the following proposition and theorem provide partial steps in this direction.

\begin{proposition}
Let $A = \langle A_i : i \in \N \rangle$ be a computable nontrivial family of sets. Every set $D$ of degree hyperimmune relative to\/ $\0'$ computes a maximal subfamily of $A$ with the $F$ intersection property.
\end{proposition}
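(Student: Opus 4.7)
The plan is to run a bounded greedy construction of $J$, using a function $g \leq_T D$---which by hypothesis is not dominated by any $\0'$-computable function, and which I may assume is strictly increasing---to cap the search bounds. For $\sigma \in \N^{<\N}$ and $t \in \N$, let $P_t(\sigma)$ be the computable predicate that for every nonempty $F \subseteq \{0, \ldots, |\sigma|-1\}$ some $x \leq t$ lies in $\bigcap_{j \in F} A_{\sigma(j)}$; this is monotone in $t$, and $\sigma$ lies in the c.e.\ tree $T$ of strings with the $F$ intersection property iff $P_t(\sigma) = 1$ for some $t$. The construction: take $\sigma_0 = \langle i_0 \rangle$ for the least $i_0$ with $A_{i_0} \neq \emptyset$ (found by search); at stage $s+1$, set $\tau := \sigma_s$ and, for $i = 0, 1, \ldots, s+1$ in order, put $\tau := \tau \frown i$ whenever $i \notin \ran(\tau)$ and $P_{g(s+1)}(\tau \frown i)$ holds; let $\sigma_{s+1} := \tau$ and $J := \bigcup_s \sigma_s$. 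This is $g$-computable, and since I only extend after verifying $F$ intersection, each $\sigma_s \in T$, so $J$ has the $F$ intersection property.

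For maximality, note that $|\sigma_s|$ has a computable upper bound (e.g., $1 + s(s+1)/2$) and all positions of $\sigma_s$ lie in $\{i_0\} \cup \{0, \ldots, s\}$, so every pair $(\sigma, i)$ that can appear as ``current $\tau$ paired with the index being processed'' during stage $s+1$ comes from a finite, computably-enumerable collection. Let $\tau^*(s)$ be the maximum, over those $(\sigma, i)$ in this collection for which $\sigma \frown i \in T$, of the least $t$ with $P_t(\sigma \frown i) = 1$. Since $T$-membership is $\Sigma^0_1$ (hence $\0'$-decidable) and least witness times are computable when defined, $\tau^*$ is $\0'$-computable, and by the hyperimmunity hypothesis the set of $s$ with $g(s+1) > \tau^*(s)$ is infinite. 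Suppose toward contradiction that some $k \notin \ran(J)$ has $A_k$ compatible with the subfamily defined by $J$, so that $\sigma \frown k \in T$ for every $\sigma \prec J$. Picking $s \geq k$ with $g(s+1) > \tau^*(s)$: at stage $s+1$, when $k$ is processed, the current $\tau$ is a prefix of $\sigma_{s+1} \preceq J$, so $\tau \frown k \in T$ and fits in the collection bounded by $\tau^*(s)$; hence $P_{g(s+1)}(\tau \frown k) = 1$ and $k$ is adjoined to $\tau$, forcing $k \in \ran(J)$, a contradiction.

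The main obstacle is the apparent self-reference: the construction of $\sigma_s$ depends on $g$, yet the maximality argument needs an oracle-free, $\0'$-computable bound $\tau^*$ on the verifications it performs. This is resolved by the coarse a priori bounds on the lengths and positions of strings that can appear at stage $s+1$, which confine the relevant verification times to a finite maximum that $\0'$ can compute---after which the hyperimmunity hypothesis does the rest.
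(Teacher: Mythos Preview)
Your argument is correct and follows essentially the same strategy as the paper: both run a greedy enumeration whose search bounds are given by a function of hyperimmune degree relative to $\0'$, and both prove maximality by exhibiting a single $\0'$-computable function bounding all relevant witness times, so that at infinitely many stages the greedy search is guaranteed to succeed. The only differences are cosmetic: the paper adds at most one index per stage and uses the cleaner bound $g(s) = $ least $n$ such that every nonempty intersection $\bigcap_{j \in F} A_j$ with $F \subseteq \{0,\ldots,s\}$ has a witness $\leq n$ (which works because $J(j) \leq j$), then invokes minimality of the omitted index; your version re-scans all $i \leq s+1$ at each stage, which lets you avoid the minimality step at the cost of a more elaborate $\tau^*$.
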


\begin{proof}
We may assume that $A$ has no finite maximal subfamily with the $F$ intersection property. By deleting some of the members of $A$ if necessary, we may further assume that $A_0 \neq \emptyset$. Define a $\emptyset'$-computable function $g \colon \N \to \N$ by letting $g(s)$ be the least $n$ such that for all finite sets $F \subseteq \{0,\ldots,s\}$,
\[
\bigcap_{j \in F} A_j \neq \emptyset \Longrightarrow (\exists a \leq n)[x \in \bigcap_{j \in F} A_j].
\]
Since $D$ has hyperimmune degree relative to $\0'$, we may fix a function $f \leq_T D$ not dominated by any $\emptyset'$-computable function. In particular, $f$ is not dominated by~$g$.

Now define $J \colon \omega \to \omega$ inductively as follows: let $J(0) = 0$, and having defined $J(s)$ for some $s \geq 0$, search for the least $i \leq s$ not yet in the range of $J$ for which there is an $a \leq f(s)$ with
\[
a \in A_i \cap \bigcap_{j \leq s} A_{J(j)},
\]
setting $J(s+1) = i$ if it exists, and setting $J(s+1) = 0$ otherwise.

Clearly, $J \leq_T f$. Moreover, $\bigcap_{i \leq s} A_{J(i)} \neq \emptyset$ for every $s$, so the subfamily defined by $J$ has the $F$ intersection property. We claim that for all $i$, if $A_i \cap \bigcap_{j \leq s} A_{J(j)} \neq \emptyset$ for every $s$ then $i$ is in the range of~$J$. Suppose not, and let $i$ be the least witness to this fact. Since $f$ is not dominated by $g$, there is an $s \geq i$ such that $f(s) \geq g(s)$ and for all $t \geq s$, $J(t) \neq j$ for any nonzero $j < i$. By construction, $J(j) \leq j$ for all $j$, so the set $F = \{i\} \cup \{J(j) : j \leq s\}$ is contained in $\{0,\ldots,s\}$. Consequently, there necessarily is some $a \leq g(s)$ with $a \in A_i \cap \bigcap_{j \leq s} A_{J(j)}$. But then $x \leq f(s)$, so $J(s+1)$ is defined to be $i$, which is a contradiction. We conclude that $\langle A_{J(i)} : i \in \omega \rangle$ is maximal, as desired.
\end{proof}

\begin{theorem}\label{thm_FIPpermitting}
Let $A = \langle A_i : i \in \N \rangle$ be a computable nontrivial family of sets. Every noncomputable c.e.\ set $W_e$ computes a maximal subfamily of $A$ with the $F$ intersection property.
\end{theorem}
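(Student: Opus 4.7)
The plan is to build the maximal subfamily $B = \langle A_{J(n)} : n \in \N\rangle$ by a standard c.e.\ permitting construction with markers. First I reduce to the nontrivial case: WLOG $A_0 \neq \emptyset$ and $A$ has no finite maximal subfamily with the $F$ intersection property (otherwise the solution is computable by following the pattern of Proposition~\ref{prop_FIPnotrevACA0} on the finite initial segment). Fix a computable enumeration $\{W_{e,s}\}_{s\in\omega}$ of $W_e$ that enumerates at most one new element per stage.

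The construction maintains a finite sequence $J_s$ whose range forms $B_s$, with $J_0(0)=0$. At stage $s+1$, if an element $y$ enters $W_{e,s+1}\setminus W_{e,s}$, I search for the least $i$ with $y \leq i \leq s$, $i \notin \operatorname{range}(J_s)$, and $\exists a \leq s+1\,(a \in A_i \cap \bigcap_{j<|J_s|} A_{J_s(j)})$; if such $i$ exists, extend $J_{s+1} = J_s \smallfrown i$. Otherwise leave $J_{s+1} = J_s$. This is a purely computable procedure (relative to the fixed enumeration of $W_e$), with extensions gated by $W_e$-permission at marker $i$.

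For the verification: $J \leq_T W_e$ follows by the standard permitting analysis---once $W_e \cap [0,i]$ stabilizes, no index $\leq i$ can be added thereafter, so $W_e$ as oracle determines $\operatorname{range}(J) \cap [0,i]$. The $F$ intersection property holds by induction, since each extension appends an index witnessing nonempty intersection with the current finite subfamily, so every finite subcollection of $\operatorname{range}(J)$ contains $\bigcap_{j<|J_t|} A_{J_t(j)}$ for some $t$. The potentially harmful case is when $A$ contains only finitely many indices with nonempty pairwise-intersection in the first few positions; the WLOG assumption and the choice that adding only requires compatibility with the \emph{current} finite $B_s$ (not all of $B$) handles this.

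The main obstacle is maximality: showing that every $i \notin \operatorname{range}(J)$ has some finite $F \subseteq \operatorname{range}(J)$ with $A_i \cap \bigcap_{j \in F} A_j = \emptyset$. Suppose otherwise for some least $i^*$. Because the construction is computable, the sequence $B_s = \operatorname{range}(J_s)$ is computable in $s$, hence the witness function $w(s) = \mu a(a \in A_{i^*} \cap \bigcap_{j \in B_s} A_j)$ is total computable under the compatibility hypothesis. I intend to extract a computable procedure for deciding $W_e$ from the failure to add $i^*$: at each $W_e$-permission stage for $i^*$ (of which there are only $|W_e \cap [0,i^*]|$ many), either $w(s) > s+1$ (witness not yet visible) or some smaller index takes priority. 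Combining the computable bound $w$ with the finite list of priority-competitors, I can computably predict the last stage at which any index $\leq i^*$ could still enter $B$, effectively computing the modulus of $W_e$ below $i^*$. The hardest point---and the place where the construction may need to be refined with a movable-marker or finite-injury layer---is ensuring that this predicted stage really bounds $W_e$'s enumeration modulus, producing the required contradiction with $W_e$'s noncomputability.
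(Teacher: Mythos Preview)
Your construction has a genuine gap, one you half-recognize at the end. As written, the procedure is fully computable (the enumeration $\{W_{e,s}\}$ is computable and you only ever extend $J_s$), so $J$ is computable and whatever role $W_e$ plays cannot be essential. The failure is in the permitting: if $i^*$ is compatible with $B$ but never enters $\operatorname{range}(J)$, the only stages at which $i^*$ could be added are those at which some $y \leq i^*$ enters $W_e$, and there are at most $i^*+1$ of those. So the failure to add $i^*$ tells you only about $W_e \res (i^*+1)$---finitely much information. Your sketch speaks of ``computing the modulus of $W_e$ below $i^*$'', but that cannot contradict the noncomputability of $W_e$; you would need the modulus at arbitrarily large arguments, and your markers never move past $i^*$.

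The paper's construction supplies exactly the two ingredients you flag as possibly necessary. Instead of permitting on the index $i$, it enumerates \emph{copies} $\langle i, n\rangle$ into an approximated set $M_s$, always choosing the copy large enough to exceed the newest element of $W_e$; copies of larger indices may later be \emph{removed} to make room for a smaller index whose compatibility witness has just appeared, but only if $W_e$ changes below each removed copy. If some $i$ is compatible with the limit yet never permanently added, one tracks the least surviving copy $\langle j_t, n_t\rangle$ above $i$'s current length of agreement; these markers are nondecreasing and tend to infinity, and the failure to act at stage $t$ certifies $W_e \res \langle j_t, n_t\rangle = W_{e,t} \res \langle j_t, n_t\rangle$, which does compute $W_e$. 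The point is that the permitting must be attached to markers that grow without bound, and this forces the construction to be non-monotone---genuinely limit-computable from $W_e$ rather than computable outright.
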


\begin{proof}
As above, assume that $A$ has no finite maximal subfamily with the $F$ intersection property, and that $A_0 \neq \emptyset$. We build a limit computable set $M$ by a permitting argument. Let $M_s$ denote our approximation to $M$ at stage $s$ of the construction. Fix a computable approximation $\langle W_{e,s} : s \in \omega \rangle$ of $W_e$, and assume this has been advanced if necessary to ensure that $W_{s+1} - W_s \neq \emptyset$ for all $s$. 

\medskip
\noindent \emph{Construction.} For each $i$ and each $n$, call $\langle i,n \rangle$ a \emph{copy} of~$i$.

\medskip
\noindent \emph{Stage~$0$.} Enumerate $\langle 0, 0 \rangle$ into
$M_0$.

\medskip
\noindent \emph{Stage $s+1$.} Assume that $M_s$ has been defined, that it is finite and contains $\langle 0, 0 \rangle$, and that each $i$ has at most one copy in $M_s$. For each $i$ with no copy in $M_s$, let $\ell(i,s)$ be the greatest $k$ with a copy in $M_s$, if it exists, such that some number $\leq s$ belongs to $A_i$ and to $A_j$ for every $j \leq k$ with a copy in $M_s$.

Now consider all $i \leq s$ such that the following hold:
\begin{itemize}
\item $\ell(i,s)$ is defined;
\item there is no $j$ with a copy in $M_s$ such that $\ell(i,s) < j < i$;
\item for all $\langle j,n \rangle \in M_s$, if $\ell(i,s) < j$ then $W_{e,s} \res \langle j,n \rangle \neq W_{e,s+1} \res \langle j,n \rangle$.
\end{itemize}
If there is no such $i$, let $M_{s+1} = M_{t_{s+1}}$. Otherwise, fix the least such $i$, and let $M_{s+1}$ be the result of removing from $M_s$ all $\langle j,n \rangle$ for $j > \ell(i,s)$, and enumerating in the least copy of $i$ greater than or equal to the least element of $W_{e,s+1} - W_{e,s}$, and greater than every element of $M_s$.

\medskip
\noindent \emph{End construction.}

\medskip
\noindent \emph{Verification.} The construction ensures that for all $m$ and $s$, if $M_{s}(m) \neq M_{s+1}(m)$ then $W_{e,s} \res m \neq W_{e,s+1} \res m$. It follows that $M(m) = \lim_s M_s(m)$ exists for all $m$ and is computable from $W_e$. Furthermore, note that $\bigcap_{\langle i,n \rangle \in M_s} A_i \neq \emptyset$ for all~$s$. Thus, if $F$ is any finite subset of $M$, then $\bigcap_{\langle i, n \rangle \in F} A_i \neq \emptyset$, because $F$ is necessarily a subset of $M_s$ for some~$s$. If we now let $J \colon \omega \to \omega$ be any $W_e$-computable function with range equal to $\{ i : (\exists n)[\langle i,n \rangle \in M]\}$, it follows that $\langle A_{J(i)} : i \in \omega \rangle$ has the $F$ intersection property.

We claim that this subfamily is also maximal. Seeking a contradiction, suppose not, and let $i$ be the least witness to this fact. So $A_i \cap \bigcap_{\langle j,n\rangle \in F} A_j \neq \emptyset$ for every finite subset $F$ of $M$, but no copy of $i$ belongs to~$M$. By construction, $\langle 0,0\rangle \in M_s$ for all $s$ and hence also to $M$, so it must be that $i > 0$. Let $i_0 < \cdots < i_r$ be the numbers less than $i$ that have copies in $M$, and let these copies be $\langle i_0,n_0 \rangle, \ldots, \langle i_r, n_r \rangle$, respectively. Let $s$ be large enough so that:
\begin{itemize}
\item some number $\leq s$ belongs to $A_i \cap \bigcap_{j \leq r} A_{i_j}$;
\item for all $t \geq s$ and all $j \leq r$, $\langle i_j, n_j \rangle \in M_t$.
\end{itemize} 
Then no copy of any $j \leq i$ not among $i_0,\ldots,i_r$ can be in $M_t$ at any stage $t \geq s$. For if it were, it would have to be removed at some later stage, which could only be done for the sake of enumerating a copy of some $k < j$. This $k$, in turn, could not be any of $i_0,\ldots,i_r$ by choice of $s$, and so it too would subsequently have to be removed. Continuing in this way would result in an infinite regress, which is impossible.

It follows that for all $t \geq s$, $\ell(i,t)$ is defined and no smaller than $i_r$, and its value tends to infinity. Since no copy of $i$ is enumerated at any such $t$, there must be some $j > \ell(i,t)$ with a copy $\langle j,n \rangle$ in $M_t$. Let $\langle j_t,n_t \rangle$ be the least such copy at stage~$t$. Then $\langle j_t,n_t \rangle \leq \langle j_{t+1},n_{t+1} \rangle$ for all $t$, because a number is enumerated into $M$ at a given stage only if it is larger than all numbers that were in $M$ at the previous one. Furthermore, for infinitely many $t$ this inequality must be strict, since infinitely often $\ell(i,t+1) \geq j_t$.

Now fix any $t \geq s$ so that $\ell(i,u) \geq i$ for all $u \geq t$. We claim that for all $v \geq u \geq t$, $W_{e,u} \res \langle j_u,n_u \rangle = W_{e,v} \res \langle j_u,n_u \rangle$. If not, choose the least $v \geq u$ such that $W_{e,v} \res \langle j_u,n_u \rangle \neq W_{e,v+1} \res \langle j_u,n_u \rangle$. Then $W_{e,v} \res \langle j_v,n_v \rangle \neq W_{e,v+1} \res \langle j_v,n_v \rangle$, so $D_v \res \langle j,n \rangle \neq W_{e,v+1} \res \langle j,n \rangle$ for every $\langle j,n \rangle \in M_v$ with $j > \ell(i,v)$. But this means that some copy of $i$ is enumerated into $M_{v+1}$, a contradiction. Thus, we have that $W_{e,u} \res \langle j_u,n_u \rangle = W_e \res \langle j_u,n_u \rangle$, so given any $n$, we can compute $W_e \res n$ simply by searching for a $u \geq t$ with $\langle j_u, n_u \rangle \geq n$. This contradicts the assumption that $W_e$ is noncomputable, completing the proof.
\end{proof}

A first attempt at showing that $\HYP$ implies $F\IP$ might be the following. Given a family $A = \sequence{A_i : i \in \omega}$ and function $f$ that is not computably dominated, define the subfamily by putting $A_i$ in at stage $s$ if $i$ is least such that $f(s)$ bounds a witness for the intersection of $A_i$ with all members of $A$ put in so far. Then, for each $i$, define a function $g_i$ by letting $g_i(s)$ be so large that it bounds such a witness for $A_i$. This way, if $A_i$ intersects all members of our subfamily, $g_i$ must be total, and so $A_i$ must eventually be put in provided there are infinitely many $s$ such that $f(s) \geq g_i(s)$. By choice of $f$, the latter condition holds if $g_i$ is computable, but in general it needs only to be computable in our approximation to the subfamily. One way to think of the preceding theorem is as saying that if $f$ has c.e.\ degree then we can make this approximation, and hence $g_i$, computable.

Our final result shows that $F\IP$ does not imply $\Pi^0_1\mathsf{G}$, or even $\AMT$.

\begin{corollary}
Over $\RCA$, $F\IP$ does not imply $\AMT$.
\end{corollary}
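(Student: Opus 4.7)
The plan is to construct an $\omega$-model of $F\IP$ in which $\AMT$ fails, combining the c.e.\ permitting content of Theorem~\ref{thm_FIPpermitting} with the well-known failure of c.e.\ permitting for $\AMT$. Theorem~\ref{thm_FIPpermitting} relativizes uniformly: for every set $X$ and every $X$-computable nontrivial family $A$, any set of c.e.-in-$X$ degree that is not $X$-computable computes a maximal subfamily of $A$ with the $F$ intersection property. Hence any Turing ideal $\mathcal{I}$ satisfying the closure property ``for every $X\in\mathcal{I}$ there is $Y\in\mathcal{I}$ with $Y$ c.e.\ in $X$ and $Y\not\leq_T X$'' automatically gives a model $(\omega,\mathcal{I})\models F\IP$.

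I would construct such an $\mathcal{I}$ entirely within the low$_2$ degrees. The key closure fact is that if $X$ is low$_2$ and $Y$ is c.e.\ in $X$, then $X\oplus Y$ is again low$_2$: $Y\leq_T X'$ gives $(X\oplus Y)'\leq_T X''\leq_T\emptyset''$. A dovetailed construction enumerating pairs $(X,e)$ with $X$ already present and at each stage adjoining a suitable c.e.-in-$X$ set (for example $X'$, which is c.e.\ in $X$ and strictly above $X$) produces a countable low$_2$ Turing ideal $\mathcal{I}$ satisfying the required closure condition.

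Finally, I would invoke the theorem of Csima, Hirschfeldt, Knight, and Soare characterizing the prime-model-bounding degrees as exactly the nonlow$_2$ degrees; their construction in fact yields a single computable complete atomic theory $T$ none of whose atomic models has low$_2$ degree. Since $T$ is computable it belongs to $(\omega,\mathcal{I})$, yet no atomic model of $T$ lies in $\mathcal{I}$, so $(\omega,\mathcal{I})\not\models\AMT$, giving the desired separation. The main obstacle is confirming that the non-permitting result for $\AMT$ is available in the precise form needed---a single computable atomic theory defeating every low$_2$ degree---rather than a separate theory for each such degree; the uniform nature of the CHKS diagonalization delivers exactly this.
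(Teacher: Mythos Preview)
Your ``key closure fact'' is false. From $Y \leq_T X'$ you correctly deduce $(X \oplus Y)' \leq_T X'' \leq_T \emptyset''$, but this is not the low$_2$ condition, which is $(X \oplus Y)'' \leq_T \emptyset''$. Your own example already breaks at the first step: take $X = \emptyset$ and $Y = \emptyset'$; then $\emptyset'$ is c.e.\ in $\emptyset$, yet $(\emptyset')'' = \emptyset''' >_T \emptyset''$, so $\emptyset'$ is not low$_2$. Iterating $X \mapsto X'$ thus produces the arithmetical hierarchy, not a low$_2$ ideal. There is a second gap as well: the uniform CHKS statement you invoke---one computable atomic theory with no low$_2$ atomic model whatsoever---is false. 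Every complete atomic decidable theory has a \emph{low} (hence low$_2$) atomic model, via the standard forcing-the-jump construction (this is essentially how one obtains low solutions for $\Pi^0_1\mathsf{G}$, which implies $\AMT$). The CHKS theorem only gives, for each fixed low$_2$ degree $\mathbf{d} \leq \mathbf{0}'$, a theory $T_{\mathbf{d}}$ with no atomic model below $\mathbf{d}$; the theory genuinely depends on $\mathbf{d}$.

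The paper's argument works precisely because it arranges the entire ideal to sit below one fixed low$_2$ $\Delta^0_2$ degree, so that a single application of CHKS suffices. One fixes a low$_2$ c.e.\ set $W$ and an ascending chain $\emptyset <_T S_0 <_T S_1 <_T \cdots <_T W$ of c.e.\ degrees (available by Sacks density), and lets the model consist of all sets computable in some $S_i$. For $F\IP$: a family computable from $S_i$ has a solution computable from $S_{i+1}$, since $S_{i+1}$ is c.e.\ (hence c.e.\ in $S_i$) and strictly above $S_i$, so the relativized Theorem~\ref{thm_FIPpermitting} applies. For the failure of $\AMT$: every set in the model is $\leq_T W$, and CHKS supplies a single theory with no atomic model $\leq_T W$. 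The point is that the paper never leaves the cone below $W$, whereas your jump iteration immediately escapes it.
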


\begin{proof}
Csima, Hirschfeldt, Knight, and Soare~\cite[Theorem~1.5]{CHKS-2004} showed that for every set ${D} \leq_T \emptyset'$, if every complete atomic decidable theory has an atomic model computable from $D$, then $D$ is nonlow$_2$. Thus $\AMT$ cannot hold in any $\omega$-model all of whose sets have degree bounded by a fixed low$_2$ $\Delta^0_2$ degree. By contrast, using Theorem \ref{thm_FIPpermitting}, we can build such a model of $F\IP$: for example, take any sequence of the form $\emptyset <_T S_0 <_T S_1 <_T \cdots <_T W_e$ where $W_e$ is a low$_2$ c.e.\ set, and take the model of all sets computable in some $S_i$.
\end{proof}

%\begin{figure}
%\[
%\xymatrix@R-10pt@C-10pt{
%\ACA \ar@2[dddd] \ar@2[dr] \ar@1[r] & D_n\IP \ar@1[l]\\
%& \Pi^0_1\mathsf{G} \ar@/_0pc/[dddl] |-{\object@{|}} |>{\object@{}} \ar@2[ddd] \ar@2[dr]\\
%& & F\IP \ar@/_0pc/[ddl] |-{\object@{|}} |>{\object@{}} \ar@1[d] \\
%& & \vdots \ar@1[d] \\
%\WKL \ar@/_0pc/[dddr] |-{\object@{|}} |>{\object@{}} \ar@2[dddd] & \AMT \ar@2[ddd] & \overline{D}_n \IP \ar@1[d] \\
%& & \vdots \ar@1[d] \\
%& & \overline{D}_2 \IP \ar@1[dl] \\ & \OPT \ar@2[dl] \\ \RCA\
%}
%\]
%\caption{Position of the intersection principles below $\ACA$, with $n \geq 2$ arbitrary. Arrows denote implications in $\RCA$, double arrows strict implications, and negated arrows non-implications.}\label{fig_summary}
%\end{figure}

We conclude by stating the questions left open by our investigation. We conjecture the answer to part (3) to be no.

%Our results are summarized in Figure \ref{fig_summary}.
\begin{question}
\
\begin{enumerate}
\item For any $n \in \omega$, does $\overline{D}_n\IP$ imply $F\IP$ or at least $\overline{D}_{n+1}\IP$?
\item Does $\AMT$ imply $\overline{D}_2\IP$ over $\RCA$? Does $\OPT$ imply $\overline{D}_2\IP$?
\item Does every computable nontrivial family of sets have a maximal subfamily with the $F$ intersection property computable in a given set of hyperimmune degree?
\end{enumerate}
\end{question}

\bibliographystyle{amsplain}
\bibliography{Choice}

\providecommand{\bysame}{\leavevmode\hbox to3em{\hrulefill}\thinspace}
\providecommand{\MR}{\relax\ifhmode\unskip\space\fi MR }
% \MRhref is called by the amsart/book/proc definition of \MR.
\providecommand{\MRhref}[2]{%
  \href{http://www.ams.org/mathscinet-getitem?mr=#1}{#2}
}
\providecommand{\href}[2]{#2}
\begin{thebibliography}{10}

\bibitem{CJS-2001}
Peter~A. Cholak, Carl~G. Jockusch, and Theodore~A. Slaman, \emph{On the
  strength of {R}amsey's theorem for pairs}, J. Symbolic Logic \textbf{66}
  (2001), no.~1, 1--55. \MR{MR1825173 (2002c:03094)}

\bibitem{CHKS-2004}
Barbara~F. Csima, Denis~R. Hirschfeldt, Julia~F. Knight, and Robert~I. Soare,
  \emph{Bounding prime models}, J. Symbolic Logic \textbf{69} (2004), no.~4,
  1117--1142. \MR{MR2135658 (2005m:03065)}

\bibitem{DM-TA}
Damir~D. Dzhafarov and Carl Mummert, \emph{Reverse mathematics and properties
  of finite character},  (2011), submitted.

\bibitem{HF-1990}
Harvey~M. Friedman and Jeffry~L. Hirst, \emph{Weak comparability of well
  orderings and reverse mathematics}, Ann. Pure Appl. Logic \textbf{47} (1990),
  no.~1, 11--29. \MR{MR1050559 (91b:03100)}

\bibitem{HS-2007}
Denis~R. Hirschfeldt and Richard~A. Shore, \emph{Combinatorial principles
  weaker than {R}amsey's theorem for pairs}, J. Symbolic Logic \textbf{72}
  (2007), no.~1, 171--206. \MR{MR2298478 (2007m:03115)}

\bibitem{HSS-2009}
Denis~R. Hirschfeldt, Richard~A. Shore, and Theodore~A. Slaman, \emph{The
  atomic model theorem and type omitting}, Trans. Amer. Math. Soc. \textbf{361}
  (2009), no.~11, 5805--5837. \MR{MR2529915}

\bibitem{Hirst-2005}
Jeffry~L. Hirst, \emph{A survey of the reverse mathematics of ordinal
  arithmetic}, Reverse mathematics 2001, Lect. Notes Log., vol.~21, Assoc.
  Symbol. Logic, La Jolla, CA, 2005, pp.~222--234. \MR{MR2185437 (2006f:03024)}

\bibitem{Jech-1973}
Thomas~J. Jech, \emph{The axiom of choice}, North-Holland Publishing Co.,
  Amsterdam, 1973, Studies in Logic and the Foundations of Mathematics, Vol.
  75. \MR{MR0396271 (53 \#139)}

\bibitem{Montalban-TA}
Antonio Montalb\'{a}n, \emph{Open questions in reverse mathematics}, Bull.
  Symbolic Logic (to appear).

\bibitem{Moore-1982}
Gregory~H. Moore, \emph{Zermelo's axiom of choice}, Studies in the History of
  Mathematics and Physical Sciences, vol.~8, Springer-Verlag, New York, 1982,
  Its origins, development, and influence. \MR{MR679315 (85b:01036)}

\bibitem{RR-1970}
Herman Rubin and Jean~E. Rubin, \emph{Equivalents of the axiom of choice},
  North-Holland Publishing Co., Amsterdam, 1970, Studies in Logic and the
  Foundations of Mathematics. \MR{MR0434812 (55 \#7776)}

\bibitem{RR-1985}
\bysame, \emph{Equivalents of the axiom of choice. {II}}, Studies in Logic and
  the Foundations of Mathematics, vol. 116, North-Holland Publishing Co.,
  Amsterdam, 1985. \MR{MR798475 (87c:04004)}

\bibitem{Shore-2010}
Richard~A. Shore, \emph{Reverse mathematics: the playground of logic}, Bull.
  Symbolic Logic \textbf{16} (2010), no.~3, 378--402.

\bibitem{Simpson-2009}
Stephen~G. Simpson, \emph{Subsystems of second order arithmetic}, second ed.,
  Perspectives in Logic, Cambridge University Press, Cambridge, 2009.
  \MR{MR2517689 (2010e:03073)}

\bibitem{Soare-1987}
Robert~I. Soare, \emph{Recursively enumerable sets and degrees}, Perspectives
  in Mathematical Logic, Springer-Verlag, Berlin, 1987, A study of computable
  functions and computably generated sets. \MR{882921 (88m:03003)}

\end{thebibliography}

\end{document}